\newcommand\NN{{\mathbb N}}
\newcommand\ZZ{{\mathbb Z}}
\newcommand\RR{{\mathbb R}}
\newcommand\eps{\varepsilon}
\renewcommand\rho{\varrho}
\newcommand\Ups{\Upsilon}
\newcommand{\f}[2]{\frac{#1}{#2}}
\newcommand{\multsum}[2]{\sum_{{\scriptstyle #1}\atop {\scriptstyle #2}}}
\newcommand\dd{{\mathrm d}}
\newcommand\ts{\textstyle}
\newtheorem{thm}{Theorem}[section]
\newtheorem{lemma}[thm]{Lemma}
\numberwithin{equation}{section}
\def\blfootnote{\xdef\@thefnmark{}\@footnotetext}
\begin{document}

\title[Expander estimates]
{Expander estimates for cubes} 




\author{J\"org Brüdern and Simon L. Rydin Myerson}










\begin{abstract} If $\mathscr A$ is a set of natural numbers of exponential density $\delta$, then the exponential density of all numbers of the form $x^3+a$ with $x\in\mathbb N$ and $a\in\mathscr A$ is at least $\min(1, \f13+\f56 \delta)$. This is a considerable improvement on the previous best lower bounds for this problem, obtained by Davenport more than 80 years ago. The result is the best possible for $\delta\ge \f45$.   \end{abstract}
\blfootnote{Keywords: sumsets, exponential density, differencing, exponential sums.}
\blfootnote{MSC(2020): 11B13, 11P55}
\blfootnote{The first author was supported by Deutsche Forschungsgemeinschaft (DFG) project numbers 255083470 and 462335009. 
The second author was supported by DFG project number 255083470 and a Leverhulme Early Career Fellowship, and received support from the Max Planck Institute for Mathematics and from the European Reseach Council (ERC) under the European Union’s Horizon 2020 research and innovation program (grant ID 648329). For the purpose of open access, the author has applied a Creative Commons Attribution (CC-BY) licence to any Author Accepted Manuscript version arising from this submission.
}

\maketitle

\section{Introduction}
Given an infinite set $\mathscr A$ of natural numbers, and an integer $k\ge 2$, we are interested in the sumset
\begin{equation}\label{11} \mathscr B_k =\{x^k+a: \, x\in\mathbb N,\, a\in\mathscr A\}. 
\end{equation}
In particular, we wish to measure how $\mathscr A$ expands to $\mathscr B_k$. This is a classical question that prominently impacted the additive theory of numbers, and sparked progress with Waring's problem in the 1930s and 1940s. For a quantitative description of the problem,
let $\delta(\mathscr A)$ denote the supremum of all real numbers $t\ge 0$ where
$$  
\limsup_{N\to\infty} N^{-t} \# \{a\in\mathscr A: a\le N\} >0. 
$$
The number $\delta=\delta(\mathscr A)$ is referred to as the exponential density of $\mathscr A$, and the goal is to estimate from below the number
$\delta_k= \delta_k(\mathscr A) = \delta(\mathscr B_k)$. 
Because the $k$-th powers near a large number $X$ are spaced apart roughly by $X^{(k-1)/k}$, one might expect that a sequence $\mathscr A$ with $ \delta(\mathscr A)\le (k-1)/k$ is sufficiently sparse to ensure that there are few repetitions among the numbers $x^k+a$ with $x\in\mathbb N$ and $a\in\mathscr A$. This suggests that perhaps one has
\begin{equation}\label{14} \delta_k = \min\big( \delta + (1/k) , 1\big). \end{equation}

There is a simple and very familiar argument (see \cite[Theorem 6.2]{hlm}) that confirms \eqref{14} for $\delta\le 1/k$. Note that this establishes the case $k=2$ of \eqref{14} in full. We briefly recall the instructive reasoning. In this context it is convenient to write $A=A_N$ for the number of $a\in\mathscr A$, $a\le N$. Fix $\varepsilon>0$. Then there are arbitrarily large values of $N$ where $A\ge N^{\delta-\varepsilon}$. For such $N$, we  consider
$$ r(n,N) = \# \{(a,x)\in\mathscr A\times \mathbb N:\, x^k+a=n,\, a\le N,\, x\le N^{1/k}\}. $$
We observe that
$$ \sum_{n\le 2N} r(n,N) = [N^{1/k}]A $$
and apply  Cauchy's inequality to infer
\begin{equation}\label{15} N^{2/k}A^2 \ll \Big(\sum_{n\le 2N} r(n,N)\Big)^2 \le \Big(\multsum{n\le 2N}{r(n,N)\ge 1} 1 \Big) \sum_{n\le 2N} r(n,N)^2. \end{equation}
An upper bound is now required for the sum on the far right of \eqref{15}, and we momentarily establish 
\begin{equation}\label{16} \sum_{n\le 2N} r(n,N)^2 \ll N^{1/k}A + A^2N^\varepsilon. \end{equation}
The definition of $\delta$ implies $A\ll N^{\delta+\varepsilon}$, whence provided that $\delta\le 1/k$, the right hand side of \eqref{16} is $O(N^{1/k + 2\varepsilon}A) $. By \eqref{15} we finally see that
$$ \multsum{n\le 2N}{r(n,N)\ge 1} 1\gg AN^{1/k-2\varepsilon} \gg N^{\delta+1/k-3\varepsilon}.  $$
Since the $n$ counted on the left hand side are elements of $\mathscr B_k$, this proves $\delta_k\ge \delta + 1/k$  when $\delta\le 1/k$, as claimed. For trivial reasons, the number of $b\in\mathscr B_k$ with $b\le N$ is at most $A_N  N^{1/k}$, and this implies 
$\delta_k\le \delta + 1/k$. This confirms \eqref{14} for $\delta\le 1/k$.

For a proof of \eqref{16}, we first observe that the sum to be bounded equals the number of solutions of 
\begin{equation}\label{161} y^k - x^k = a-b  \end{equation}
in integers $x,y,a,b$ with $1\le x,y\le N^{1/k}$ and $a,b\in\mathscr A$, $a,b\le N$. The number of such solutions with $a=b$ is $[N^{1/k}]A$. Meanwhile, when $a\neq b$, a divisor function estimate shows that the number of solutions of \eqref{161} in $x,y$ is $O(N^\varepsilon)$, and the number of choices for $a$, $b$ is $A^2$. This establishes \eqref{16} at once.

One may also run the circle method on the equation \eqref{161}, at least in an informal way. If one is ready to accept the widely held belief that $k$-th power Weyl sums achieve square-root cancellation on appropriate minor arcs, then one is led to expect that
$$ \sum_{n\le 2N} r(n,N)^2 \ll AN^{1/k+\varepsilon} + A^2N^{(2-k)/k+\varepsilon}.$$ 
If true, one may feed this bound to \eqref{15} to infer that \eqref{14} holds in general. This second heuristical argument in support of \eqref{14} gives us reason to name \eqref{14} the {\em expander conjecture for $k$-th powers}. Despite its simple proof for $k=2$, and for the limited range $\delta\le 1/k$ for larger $k$, no other case of our conjecture is currently known. The best lower bounds for $\delta_k$ are due to Davenport \cite{D}, dating back to 1942 (see also \cite[Theorem 6.2]{hlm}). In the cubic case $k=3$, he showed that if $\frac13 \le \delta <1$, then one has
\begin{equation}\label{17} \delta_3 \ge \frac13 \Big( 1+ \frac{4\delta}{1+\delta}\Big). \end{equation}
Note here that this falls short of proving \eqref{14}. In particular, for all $\delta<1$ we fail to show that $\delta_3=1$. 

In this paper we concentrate on the cubic case. We improve Davenport's lower bound \eqref{17} for $\delta > \frac35$, and for $\delta\ge \frac45$ we now reach the best possible result $\delta_3=1$. 

\begin{thm} \label{exp} Let $\mathscr A\subset \mathbb N$ be a set of exponential density $\delta$. If $\delta\le \f45$, then $\delta_3\ge \frac13 + \frac56\delta$. If $\delta \ge \frac45$, then $\delta_3=1$.
\end{thm}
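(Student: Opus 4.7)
My plan is to follow the Cauchy--Schwarz framework of the introduction. For $N$ along a sequence with $A \gg N^{\delta - \eps}$, the bound \eqref{15} combined with $\sum_{n \le 2N} r(n, N) = [N^{1/3}] A \gg N^{1/3 + \delta - \eps}$ reduces the theorem to the second-moment estimate
\[
\sum_{n \le 2N} r(n, N)^2 \ll N^{1/3 + 7\delta/6 + \eps}.
\]
Since $1/3 + 5\delta/6 = 1$ at $\delta = 4/5$, this single estimate, together with the trivial $\delta_3 \le 1$, yields $\delta_3 = 1$ for all $\delta \ge 4/5$; no separate argument is required for the endpoint regime.

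By \eqref{161}, the second moment equals $\sum_h R(h) D(h)$, where
\[
R(h) = \#\{(x, y) \in [1, N^{1/3}]^2 : y^3 - x^3 = h\}, \quad D(h) = \#\{(a, b) \in (\cal A \cap [1, N])^2 : a - b = h\}.
\]
The diagonal $h = 0$ contributes $[N^{1/3}] A \ll N^{1/3 + \delta}$, well within the budget. For the off-diagonal $\sum_{h \ne 0} R(h) D(h)$, I would use: (i) the divisor bound $R(h) \ll |h|^\eps$ for $h \ne 0$, via the factorisation $h = (y - x)(y^2 + xy + x^2)$ (already used in the introduction for \eqref{16}); (ii) Hua's $L^4$ estimate $\sum_h R(h)^2 = \int_0^1 |f(\alpha)|^4 \, d\alpha \ll N^{2/3 + \eps}$, yielding the level-set bound $\#\{h \ne 0 : R(h) \ge R\} \ll N^{2/3 + \eps}/R^2$; and (iii) the trivial $\sum_h D(h) = A^2$ and $D(h) \le A$.

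The naive combination of (i) and (iii) gives $\sum_{h \ne 0} R(h) D(h) \ll N^{2\delta + \eps}$, sufficient only for $\delta \le 2/5$. Beyond that range, the plan is to dyadically decompose $h \ne 0$ by the size of $R(h)$: on each level $\{h : R(h) \asymp R\}$, estimate $\sum D(h)$ by the sharpest of the trivial $A^2$, of $\|D\|_\infty$ times the level-set count $N^{2/3 + \eps}/R^2$, and of $(E_2(\cal A))^{1/2} (N^{2/3 + \eps}/R^2)^{1/2}$ via Cauchy--Schwarz. Multiplying by $R$ and summing dyadically over $R$ (necessarily $R \ll N^\eps$, by (i)) should deliver the target exponent, at least when $E_2(\cal A) = \sum_h D(h)^2$ is not too large.

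The main obstacle, I expect, is the structured regime where $E_2(\cal A)$ is close to $A^3$ and $D(h)$ concentrates on a narrow range. A dichotomy on the additive energy is then likely needed: when $E_2(\cal A) \ll A^{3 - \eta}$ the dyadic argument above carries through; when $E_2(\cal A) \gg A^{3 - \eta}$, a Freiman--Ruzsa theorem places most of $\cal A$ inside a short arithmetic progression $P$ with $|P| \ll A$, and one then analyses $|\cal A + \{x^3 : 1 \le x \le N^{1/3}\}|$ directly, exploiting that the gaps $(x + 1)^3 - x^3 \asymp x^2$ force cube-shifts of $P$ to overlap for $x$ up to $\sim A^{1/2}$ and to be nearly disjoint thereafter. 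The two subcases must be calibrated so that both yield the exponent $1/3 + 7\delta/6$ precisely at $\delta = 4/5$.
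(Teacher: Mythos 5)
Your reduction to a second--moment bound of the shape $\sum_n r(n,N)^2\ll N^{1/3+7\delta/6+\eps}$ is the right framework (it is essentially \eqref{15}), but the tools you propose for proving that bound cannot deliver it, and the two ideas that actually drive the paper are absent.

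First, the decisive structural device is missing: the paper does \emph{not} work with the full range $x,y\le N^{1/3}$. It takes $P=N^{2/5}$ and restricts the cubes to the dyadic range $P<x,y\le 2P$ (and to primes). Then the equation $y^3-x^3=z_1-z_2$ with $|z_1-z_2|\le N=P^{5/2}$ forces $|y-x|\le H=\sqrt P$, so the number of admissible pairs $(x,y)$ drops from $P^2$ to $PH$. This Davenport--Vaughan differencing is what makes the off--diagonal count small; in your normalisation there is no such constraint and nothing thins out the pairs. Second, even after differencing, the bound $M_2\ll P^{1+\eps}Z+P^{\eps-1}Z^2$ (Lemma \ref{L91}) is obtained by the circle method applied to the differenced equation, and rests on the new exponential sum estimates of Theorems \ref{thm31} and \ref{thm32} --- double Poisson summation and cubic Gauss sums on the major arcs, a novel counting argument on the minor arcs, and the coprimality/prime device to suppress the peaks of $F_1$ at denominators $r^3$. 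That is the entire content of Sections 2--7 and has no counterpart in your outline.

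Concretely, your combination of (i)--(iii) cannot beat the trivial bound. Since $R(h)\ll N^{\eps}$ for $h\ne 0$, the dyadic levels $R\asymp 2^j$ only run up to $N^{\eps}$, so Hua's level--set bound $N^{2/3+\eps}R^{-2}$ degenerates to the trivial count of $h$ with $R(h)\ge 1$; the decomposition then yields only $\min(A^2,AN^{2/3})N^{\eps}$ for the off--diagonal sum, which gives nothing beyond $\delta_3\ge\min(\delta+\tfrac13,\tfrac23)$. This is unavoidable: $L^p$--information on $R$ and $D$ separately is consistent with $D$ concentrating exactly on the level sets of $R$, so no Hölder-type recombination can see the required cancellation. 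Your energy dichotomy does not close the gap either: the Cauchy--Schwarz branch gives $\ll E_2(\mathcal A)^{1/2}N^{1/3+\eps}$, which would need $E_2(\mathcal A)\ll A^{7/3}$ --- far below the Freiman threshold $E_2\gg A^{3-\eta}$ --- so a wide middle range of energies is covered by neither branch; and in the structured branch, handling $\mathcal A$ essentially an arithmetic progression is precisely Vaughan's problem, which itself requires the exponential--sum machinery. Your remark that the endpoint $\delta\ge 4/5$ needs no separate argument is correct and matches the paper.
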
  

Our approach to proving this result is designed for application to cubes, but extends to larger values of $k$ in principle. However, already for moderately sized $k$, a more direct approach should perform better. Rather than following the path proposed here, one may apply recent bounds for Weyl sums \cite{BDG,TDW} that become available through our now almost perfect control of Vinogradov's mean value, and one should be able to show that when $1-\delta \le 2k^{-3}$, then $\delta_k=1$. For comparison, as in the cubic case, Davenport's estimates have the appalling feature that $\delta_k=1$ is achieved only when one already has $\delta=1$.

Davenport used his lower bounds for $\delta_k$ in his work on Waring's problem. With such applications in mind, various methods have been developed that improve on Davenport's estimates for specific sets $\mathscr A$. In Waring's problem, one may explore the homogeneity of sums of $k$-th powers. Davenport \cite[Theorem 4]{D} proposed one such method that was later developed by Vaughan \cite{V86,V89}. Early experiments in the applicability of the circle method  are due to Vaughan \cite{V85} and Thanigasalam \cite{T}. In Vaughan's work it was important that the input set $\mathscr A$ stems from a polynomial, or at the very least, that $\mathscr A$ is somewhat regularly distributed over arithmetic progressions. He showed that sums of three positive cubes have exponential density at least $\frac89$. We take $\mathscr C = \{x^3+y^3: \, x,y\in\mathbb N\}$, and note that $\delta(\mathscr C)=\frac23$. Our Theorem \ref{exp} yields  $\delta_3(\mathscr C)\ge\frac89$ and recovers Vaughan's result\footnote{For the best currently known lower bound for $\delta_3(\mathscr C)$ see \cite{WW}.}. This is not a coincidence. In fact, our Theorem \ref{exp} results from an attempt to trim Vaughan's argument to apply to general input sets. 

The simple argument that gave \eqref{14} is rather flexible. We modify it following Davenport \cite{D} and Vaughan \cite{V85}. Choose $P=N^{2/5}$ and then examine the number of solutions of \eqref{161} with $k=3$ and
$P<x,y\le 2P$. Note that implicit in the equation and the size constraints is the condition $|x-y|\le\surd P$. Like Vaughan, we apply the circle method to the equation \eqref{161},  bring in $h=y-x$ as a new variable, and are then led to study the exponential sum
\begin{equation}\label{W} F_1(\alpha) = \sum_{1\le h\le H} \sum_{P<x\le 2P}
e(\alpha  h(3x^2+3xh +h^2)), \end{equation}
where $H=\surd P$.
Vaughan \cite{V85} applies Weyl differencing. This bounds the sum \eqref{W} by $O(P^{1+\varepsilon})$ on minor arcs. On major arcs, the variable $\alpha$ is close to a fraction $a/q$ with $q\le P$ and $(a,q)=1$. In this situation Weyl differencing  yields the upper bound $O(P^{3/2}q^{-1/2})$, too weak to compute the major arc contribution satisfactorily. Indeed, one seems to require an upper bound on major arcs that is not much weaker than $P^{3/2}q^{-1}$. As an alternative approach, one may apply Poisson summation to the double sum \eqref{W}. The main term then factors into local parts, and at finite places the square of the modulus  of a classical cubic Gauss sum occurs. This observation is due to Zhao \cite{Z}. For cube-free $q$, this achieves the desired upper bound $P^{3/2}/q$. Unfortunately, when $q=r^3$ is a pure cube, the actual size of \eqref{W} is about $P^{3/2}r^{-2}$. This is disappointing, and seems to cast doubt on the potential of this approach. However, it can be shown that the peaks of \eqref{W} at $\alpha=a/r^3$ arise from pairs $x,y=x+h$ with $x$ and $y$ divisible by $r$, and that the remaining part of the sum is of favourable size. Thus, if at least one of $x$ or $y$ were to run over integers coprime to $r$ only, then the offensive effects from cubic moduli $r^3$ would be suppressed. We achieve this by initially working with cubes of primes. Once the argument takes us to equation \eqref{161} (with $k=3$), we cover the primes with integers $x,y$ satisfying $(x,y,w)=1$, with the parameter $w$ free of prime factors exceeding $P$, but otherwise at our disposal. Thus, we now work with the more general sum
\begin{equation}\label{33} F_w(\alpha) = \sum_{1\le h\le H} \multsum{P<x\le 2P}{(x,h,w)=1}
e(\alpha  h(3x^2+3xh +h^2)). \end{equation}
As it turns out, the condition  $(x,y,w)=1$ eliminates unwanted peaks of $\eqref{W}$ at $a/q$ as long as the cubic factors of $q$ are solely composed of primes dividing $w$. In  order to describe this precisely, we need to introduce more notation. The estimate 
for $F_w$ features prominently a certain multiplicative function $\kappa_w$ that we define
on powers of a prime $p$ with $p\nmid w$ by
$$ \kappa_w(p) = 2p^{-1/2},\quad \kappa_w(p^2)= p^{-1}, $$
while for all $l\ge 0$ we put $\kappa_w(p^{l+3}) = p^{-1}\kappa_w(p^l)$. When $p\mid w$  and $p\neq 3$, we put
$$ \kappa_w(p) = 2p^{-1/2},\quad \kappa_w(p^l)= 0 \quad (l\ge 2),$$
while when $3\mid w$, we take
$$ \kappa_w(3)= 2, \quad \kappa_w(9) = 1, \quad  \kappa_w(3^l)= 0 \quad (l\ge 3).$$
For later use, we note at once that one has $\kappa_w(p^l) \le p^{-l/3}$ for all $p\ge 67$ and all $l\in\NN$. For $p\le 61$ and again all $l\in\NN$, one only has $\kappa_w(p^l) \le 2p^{-l/3}$.
This implies  the inequality
\begin{equation}\label{kapp} \kappa_w(q) \le 2^{18} q^{-1/3}\quad (q\in\NN, w\in\NN). \end{equation}

We also require a Farey dissection of the unit interval.
Given integers $a,q$ with $0\le a\le q\le P$ and $(a,q)=1$, let
$$ \mathfrak M(q,a)= \{\alpha\in[0,1]:\, |q\alpha-a|\le (6HP)^{-1}\}, $$
and for $\alpha\in\mathfrak M(q,a)$ put
$$ \Ups_w(\alpha) = \kappa_w(q)^2 (1+HP^2|\alpha-a/q|)^{-1}.$$
The intervals $\mathfrak M(q,a)$ are disjoint, and we denote their union by $\mathfrak M$. We also write $\mathfrak m= [0,1]\setminus \mathfrak M$ and put $\Ups_w(\alpha)=0$ for $\alpha\in\mathfrak m$. This defines a function $\Ups_w: [0,1]\to [0,1]$. 

We are now in a position to announce the principal result that we expect to be useful well beyond the problem considered in this paper. 

\begin{thm}\label{thm31} Uniformly in $w\in\mathbb N$ and $\alpha\in[0,1]$, one has
$$ F_w(\alpha) \ll HP(\log P)\Ups_w(\alpha) + P^{1+\varepsilon}. $$
\end{thm}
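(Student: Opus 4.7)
The plan follows the Poisson summation approach of Zhao \cite{Z} applied to the double sum in $(x, h)$, with the coprimality $(x, h, w) = 1$ implemented via M\"obius inversion to suppress the ``peak'' contributions at pure cubic moduli whose primes divide $w$.

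By Dirichlet's theorem, find $a/q$ in lowest terms with $1 \le q \le 6HP$ and $|\alpha q - a| \le (6HP)^{-1}$. If $q > P$, so $\alpha \in \fk m$, apply Weyl's inequality to the inner quadratic $x$-sum (whose leading coefficient is $3h\alpha$), and sum over $h \le H = P^{1/2}$ to obtain $F_w(\alpha) \ll P^{1+\varepsilon}$, giving the second term.

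For the major arc case $\alpha \in \fk M(q, a)$, $q \le P$, write $\beta = \alpha - a/q$ and detect $(x, h, w) = 1$ by M\"obius, $\mathbf{1}_{(x, h, w) = 1} = \sum_{e \mid (x, h, w)} \mu(e)$. For each squarefree $e \mid w$, analyse the subsum having $e \mid x$, $e \mid h$ by splitting $x = qm + r$, $h = qn + s$ and applying Poisson summation to the $(m, n)$-double sum. Using the identity $h(3x^2 + 3xh + h^2) = (x + h)^3 - x^3$, the $a/q$-part of the phase reduces to $a((r + s)^3 - r^3)/q$ modulo $1$, and each subsum takes the form
\[
\frac{1}{q^2}\sum_{k, l \in \ZZ} U_e(q, a; k, l)\, J_e(k, l; \beta),
\]
where $U_e(q, a; k, l) = \sum_{\substack{r, s \bmod q \\ e \mid r,\, e \mid s}} e\bigl((a((r+s)^3 - r^3) + kr + ls)/q\bigr)$ and $J_e(k, l; \beta)$ is a 2D oscillatory integral over $(P, 2P] \times [1, H]$. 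Integration by parts gives $|J_e(0, 0; \beta)| \ll HP (\log P)(1 + HP^2|\beta|)^{-1}$, and the nonzero frequencies contribute $O(P^{1+\varepsilon})$ in total.

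The heart of the argument is the local evaluation of $U_e(q, a; 0, 0)$. The substitution $y = r + s$ yields $U_e = |C_e(a, q)|^2$, where
\[
C_e(a, q) = \sum_{\substack{y \bmod q \\ e \mid y}} e(ay^3/q)
\]
is a restricted cubic Gauss sum (Zhao's key observation). By the Chinese Remainder Theorem, $U_e/q^2$ factorises over prime powers $p^\ell \| q$, and the classical local evaluations yield the upper bound $\kappa_w(p^\ell)^2$; the recursion $\kappa_w(p^{l+3}) = p^{-1}\kappa_w(p^l)$ reflects the Hensel-type identity $C(a, p^{l+3}) = p^2 C(a, p^l)$ for cubic Gauss sums. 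When $p \mid w$, summing over $e \mid w$ with M\"obius signs removes the residues $y \equiv 0 \pmod p$ from $C_e$; since these are precisely the residues carrying the peak of $|C(a, p^{3m})|^2$, their removal forces $\kappa_w(p^\ell) = 0$ for $\ell \ge 2$ (respectively $\ell \ge 3$ at $p = 3$), completing the identification of $\kappa_w$. The main obstacle will be this local cubic Gauss sum analysis, particularly the delicate cases at $p \in \{2, 3\}$ (where the standard Hensel lifting must be modified) and at primes $p \le 67$ (where the uniform bound $\kappa_w(q) \ll q^{-1/3}$ only holds with an additional constant), together with the careful verification that the signed sum over $e \mid w$ precisely implements the prescribed vanishing of $\kappa_w$.
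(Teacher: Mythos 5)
Your major-arc outline is essentially the paper's own argument: the paper performs two successive one-dimensional Poisson summations (first in $x$, then in $h$) rather than a single two-dimensional one, but the local analysis you describe — Zhao's identification of the local factor as $|C|^2$ for a restricted cubic Gauss sum, the CRT factorisation, and the M\"obius-signed removal of the residues $y\equiv 0 \bmod p$ to kill the peaks at cubic moduli — is exactly what happens there. The genuine gap is the minor arcs, which you dispatch in one sentence and which in fact occupy two sections of the paper and contain its most novel arguments.

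Two things go wrong. First, even for $w=1$, ``apply Weyl's inequality to the inner $x$-sum and sum over $h$'' does not give $P^{1+\varepsilon}$: for $\alpha\in\fk m$ the denominator of the induced approximation to $3h\alpha$ can be as small as $q/(q,3h)\asymp P^{1/2}$, so the Weyl bound for a single $h$ is only $O(P^{3/4+\varepsilon})$, and summing over $h\le H=P^{1/2}$ yields $P^{5/4+\varepsilon}$. One must instead difference the double sum jointly in $x$ and exploit the product structure $hh_1$ via a divisor estimate. Second, and more seriously, the M\"obius inversion produces the sums $G(\alpha e^3;P/e,H/e)$ (equivalently, your restricted subsums with $e\mid x$, $e\mid h$), and $\alpha e^3$ can admit a rational approximation with very small denominator $r_e$ even when $\alpha$ itself lies on $\fk m$; for such $e$ no form of Weyl's inequality yields an acceptable bound. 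The paper isolates this set $\cal D$ of bad divisors, applies Poisson summation to each $G(\alpha e^3;\cdot)$ \emph{on the minor arcs}, and then needs a counting argument with no antecedent in your sketch: after dyadic decomposition, the fractions $b_e/(r_ee^3)$ approximating $\alpha$ are shown to take few distinct values, with the multiplicity of each value controlled through the gcd $(b_e,e^3)$, before the contributions can be summed over $e$. Nothing in your proposal substitutes for this, so the minor-arc half of the theorem is unproved. (On the major arcs, note also that your claim that the nonzero frequencies contribute $O(P^{1+\varepsilon})$ hides a nontrivial case analysis, and that the divisor sum must first be truncated to those $d$ with $d|\beta|\le(48P^2)^{-1}$ before Poisson summation in $h$ can be applied.)
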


We only require a special case of Theorem \ref{thm31} where it is possible to sharpen the conclusion somewhat. We put $Q=P^{3/4}$.
Given integers $a,q$ with $0\le a\le q\le Q$ and $(a,q)=1$, let
$$ \mathfrak N(q,a)= \{\alpha\in[0,1]:\, |q\alpha-a|\le P^{-7/4}\}, $$
and for $\alpha\in\mathfrak N(q,a)$ put
$$ \Xi(\alpha) = 4^{\omega(q)}(q+HP^2|q\alpha-a|)^{-1},$$
where $\omega(q)$ denotes the number of distinct prime factors of $q$.
Note that $\mathfrak N(q,a)\subset\mathfrak M(q,a)$. Hence, 
the intervals $\mathfrak N(q,a)$ are again disjoint, and we denote their union by $\mathfrak N$. For $\alpha\in [0,1]\setminus \mathfrak N$ we put $\Xi(\alpha)=0$ to define a function $\Xi: [0,1]\to [0,1]$.

\begin{thm}\label{thm32} Let $\varpi$ denote the product of all primes not exceeding $P^{1/4}$. Then 
$$ F_\varpi(\alpha) \ll HP(\log P) \Xi(\alpha) + P^{1+\varepsilon}. $$
\end{thm}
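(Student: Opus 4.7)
The plan is to derive Theorem \ref{thm32} as a corollary of Theorem \ref{thm31} applied with $w=\varpi$, by controlling where the weight $\Ups_\varpi$ survives and by comparing it to $\Xi$. I partition $[0,1]=\fk N\cup(\fk M\setminus\fk N)\cup\fk m$ and treat each piece.

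Rewriting $|\alpha-a/q|=|q\alpha-a|/q$ on $\fk M(q,a)$ puts the weight of Theorem \ref{thm31} into the form
$$\Ups_\varpi(\alpha)=\frac{\kappa_\varpi(q)^2\,q}{q+HP^2|q\alpha-a|},$$
which is directly comparable to $\Xi(\alpha)=4^{\omega(q)}/(q+HP^2|q\alpha-a|)$. Hence, on $\fk N(q,a)\subset\fk M(q,a)$, the required bound $\Ups_\varpi(\alpha)\ll\Xi(\alpha)$ reduces to showing $\kappa_\varpi(q)^2 q\ll 4^{\omega(q)}$ for every $q\le Q=P^{3/4}$. By multiplicativity this reduces to a factor-by-factor check. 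For $p^l\|q$ with $p\nmid\varpi$ one has $p>P^{1/4}$, so $p^l\le q\le P^{3/4}$ forces $l\le 2$; the definition then gives $\kappa_\varpi(p)^2 p=4$ and $\kappa_\varpi(p^2)^2 p^2=1$. For $p\mid\varpi$, non-vanishing of $\kappa_\varpi(p^l)$ already forces $l\le 1$ (or $l\le 2$ when $p=3$), and each of the remaining values of $\kappa_\varpi(p^l)^2 p^l$ is an absolute constant, the largest coming from $p=3$. Multiplying over primes yields $\kappa_\varpi(q)^2 q\ll 4^{\omega(q)}$ on $\fk N$.

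Next I treat $\fk M\setminus\fk N$, where $\Xi(\alpha)=0$ and so it suffices to prove $F_\varpi(\alpha)\ll P^{1+\varepsilon}$. If $q>Q$, the bound \eqref{kapp} gives $\kappa_\varpi(q)^2\ll q^{-2/3}\ll P^{-1/2}$, and so
$$HP(\log P)\Ups_\varpi(\alpha)\ll P^{3/2}\log P\cdot P^{-1/2}=P\log P.$$
If instead $q\le Q$ but $|q\alpha-a|>P^{-7/4}$, then $q+HP^2|q\alpha-a|\gg P^{5/2}\cdot P^{-7/4}=P^{3/4}$; combining with $\kappa_\varpi(q)^2 q\ll q^{1/3}\le P^{1/4}$ from \eqref{kapp} once more gives $HP(\log P)\Ups_\varpi(\alpha)\ll P\log P$. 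In both situations the contribution is absorbed in $P^{1+\varepsilon}$, and applying Theorem \ref{thm31} completes the argument on this region. On $\fk m$ there is nothing to do, since $\Ups_\varpi$ and $\Xi$ both vanish.

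The main obstacle, although purely combinatorial, is the prime-by-prime verification of $\kappa_\varpi(p^l)^2 p^l\ll 4$ needed for the comparison on $\fk N$. The hypothesis $w=\varpi$ is used crucially there: if $w$ were smaller, then for $q\le P^{3/4}$ one could encounter prime powers $p^l\|q$ with $l\ge 3$ and $p\nmid w$, and the identity $\kappa_w(p^3)^2 p^3=p$ shows that such factors would destroy the bound $\kappa_w(q)^2 q\ll 4^{\omega(q)}$. Taking $w=\varpi$ exactly rules this out by the arithmetic inequality $p^3>P^{3/4}\ge q$ whenever $p\nmid\varpi$.
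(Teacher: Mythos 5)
Your proposal is correct and follows essentially the same route as the paper: apply Theorem \ref{thm31} with $w=\varpi$, note that $\Ups_\varpi(\alpha)\ll P^{-1/2}$ on $\fk M\setminus\fk N$ via \eqref{kapp}, and on $\fk N$ compare $\Ups_\varpi$ with $\Xi$ using that $q\le P^{3/4}$ excludes cubes of primes exceeding $P^{1/4}$ while $w=\varpi$ annihilates $\kappa_\varpi$ at cubes of the remaining primes. Your prime-by-prime verification of $\kappa_\varpi(p^l)^2p^l\ll 4$ is just a reorganization of the paper's split into ``$q$ has a cubic factor'' versus ``$q$ cube-free, so $\kappa_w(q)\le 2^{\omega(q)}q^{-1/2}$''.
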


With Theorem \ref{thm31} in hand, the proof of its corollary, named here Theorem \ref{thm32}, is so easy that we give it at once. In fact, by \eqref{kapp}, we see that whenever $\alpha\in\mathfrak M\setminus \mathfrak N$, then $\Ups_w(\alpha) \ll P^{-1/2}$ holds
 uniformly in $w$. This shows that one has $F_w(\alpha)\ll P^{1+\varepsilon}$ for these $\alpha$. Further, when $\alpha\in \mathfrak N(q,a)$ for some coprime pair $a,q$ with $q\le Q$ and $q$ has a cubic factor, then in particular, there is a prime $p$ with $p^3\mid q$. Now $p\le q^{1/3}\le P^{1/4}$, and hence $\kappa_\varpi(q)=0$, where $\varpi$ is as in Theorem \ref{thm32}. Consequently, in this case we again have 
$F_\varpi(\alpha)\ll  P^{1+\varepsilon}$. This leaves the case where $q$ is cube-free. Then the inequality $\kappa_\varpi(q) \le 2^{\omega(q)}q^{-1/2} $ is immediate from the definition of $\kappa_w$, and the conclusion of Theorem \ref{thm32} follows from Theorem \ref{thm31}.

Unfortunately, the coprimality constraint present in \eqref{33} interferes with the previously rather straightforward treatments of major and minor arcs for the sum $F_1(\alpha)$, and causes a larger number of technical complications. This forces us to first review quadratic Weyl sums in Section 2. Theorem 2.1 below refines important work of Vaughan \cite{gen}, and may be of independent interest. In Sections 3 to 5 we estimate $F_w(\alpha)$ on the major arcs. We draw inspiration from an unweighted version \cite{B24} of Zhao's work \cite{Z}.  Although technically demanding, our arsenal here solely contains materiel that has been in use for some time. Our treatment of the minor arcs in Sections 6 and 7 uses an inequality of Weyl's type, but then also invokes the Poisson summation formula, and finally depends on a succession of counting processes that do not seem to have any ancestors in the literature.

In this paper, we apply the following convention concerning the letter $\varepsilon$. Whenever $\varepsilon$ occurs in a statement, it is asserted that the statement is true for any positive real value assigned to $\varepsilon$. Constants implicit in the use of Vinogradov's and Landau's familiar symbols may depend on the value of $\varepsilon$. We abbreviate $\exp(2\pi \mathrm i \alpha)$ to $e(\alpha)$. The number of divisors of the natural number $q$ is denoted $\tau(q)$,  and $\phi(q)$ is Euler's totient function. {We reserve the letter $d$ to denote a square-free number; we may also use $d_0,d'$ and so forth. As our main objects of study, $F_w$ and $\kappa_w$, depend on $w$ only on through its square-free part, we will invariably view $w$ through the lens of its square-free divisors $d$.}

An anonymous referee proposed a substantial number of minor alterations to an earlier version of this text. These were evidently thought through with care, and we feel the result is quite a bit more readable. We gratefully acknowledge this invaluable help.

\section{The quadratic Weyl sum}

In this section, we study the general quadratic Weyl sum
\begin{equation}\label{21} f(\alpha_1,\alpha_2) = \sum_{x\le X} e(\alpha_1 x+ \alpha_2 x^2). \end{equation}
The local analogues
\begin{equation}\label{22} S(q,a_1,a_2) = \sum_{x=1}^q e\Big(
\frac{a_1x+a_2 x^2}{q}\Big) \end{equation}
and
$$ I(\beta_1,\beta_2) = \int_0^X e(\beta_1 u+ \beta_2 u^2) \dd u $$
are featured in our main result. 

\begin{thm}\label{quadWeyl} Let $a_1,a_2\in\mathbb Z$, $q\in\mathbb N$, $\alpha_j\in\mathbb R$ and $\beta_j = \alpha_j-a_j/q$. Suppose that $|\beta_1|\le 1/(2q)$. Then
$$ f(\alpha_1,\alpha_2) - q^{-1}S(q,a_1,a_2)I(\beta_1,\beta_2) \ll (q,a_2)^{1/2} (q+qX^2|\beta_2|)^{1/2} \log q. $$
\end{thm}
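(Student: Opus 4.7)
The natural first move is to split $x = qy + r$ with $1 \le r \le q$. Since $a_1 y$, $a_2 q y^2$ and $2 a_2 r y$ are integers, the arithmetic phase separates cleanly:
$\alpha_1 x + \alpha_2 x^2 \equiv (a_1 r + a_2 r^2)/q + \beta_1(qy+r) + \beta_2(qy+r)^2 \pmod 1$.
Substituting, one obtains
\[
f(\alpha_1, \alpha_2) = \sum_{r=1}^{q} e\!\left(\frac{a_1 r + a_2 r^2}{q}\right) T_r, \qquad T_r = \sum_{1 \le r + qy \le X} g(r + qy),
\]
where $g(u) = e(\beta_1 u + \beta_2 u^2)$. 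The aim is then to approximate $T_r$ by $q^{-1} I(\beta_1, \beta_2)$, which upon summation over $r$ produces the main term $q^{-1} S(q,a_1,a_2) I(\beta_1,\beta_2)$.

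I would carry this out via Poisson summation on the inner sum, obtaining $T_r = q^{-1} \sum_m e(mr/q) J_m$ with $J_m = \int_1^X g(v) e(-mv/q) \, dv$, and then interchange the $r$- and $m$-sums to get $f(\alpha_1, \alpha_2) = q^{-1} \sum_m S(q, a_1+m, a_2) J_m$. The $m=0$ term gives the main term, with error $O(q^{-1}|S(q,a_1,a_2)|)$ from $|J_0 - I(\beta_1,\beta_2)| \le 1$. For $m \ne 0$ I would use the uniform Gauss sum bound $|S(q, a_1+m, a_2)| \le (q, a_2)^{1/2} q^{1/2}$, which moreover vanishes unless $(q, a_2) \mid (a_1 + m)$, together with oscillatory integral estimates on $J_m$.

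To bound $J_m$, I would complete the square, writing the phase as $\beta_2 (v - v_*(m))^2 + \mathrm{const}$ with stationary point $v_*(m) = (m/q - \beta_1)/(2\beta_2)$, and invoke Fresnel estimates. In the stationary regime $v_*(m) \in [1, X]$, which captures $O(1 + qX|\beta_2|)$ integers $m$, one has $|J_m| \ll \min(X, |\beta_2|^{-1/2})$; outside this window, integration by parts yields $|J_m| \ll q/|m - qc_\pm|$ with $c_\pm = \beta_1 + 2\beta_2$ or $\beta_1 + 2\beta_2 X$. Combined with the Gauss sum bound and the divisibility restriction from $(q,a_2)\mid (a_1+m)$, the stationary contribution is $\ll (q, a_2)^{1/2}(q + qX^2|\beta_2|)^{1/2}$, matching the desired bound in that regime.

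The main technical obstacle is that the bound $|J_m| \ll q/|m|$ for large $|m|$ is not absolutely summable, so a naive sum $\sum_{m\neq 0} |J_m|$ diverges logarithmically. I would address this by truncating Poisson at a level $M = q^C$ for some fixed $C$, then either (i) replace the hard cutoff $v \in [1, X]$ by a smoothed version at scale $1/q$, changing $f$ and $I(\beta_1,\beta_2)$ by only $O(1)$ each while causing $J_m$ to decay rapidly for $|m| \ge M$, or (ii) perform further integrations by parts using $f'' = 2\beta_2$ to obtain $|J_m| \ll q^3|\beta_2|/|m|^3$ in the far tail. The partial sum $\sum_{0 < |m| \le M} q/|m - qc_\pm|$ then contributes exactly the factor $\log q$ allowed by the theorem. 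Finally, the hypothesis $|\beta_1| \le 1/(2q)$ is used to ensure that the linear drift $\beta_1 v$ does not displace the stationary points enough for any $m \neq 0$ term to rival the $m = 0$ main contribution, so that $a_1/q$ is unambiguously the distinguished denominator-$q$ approximation.
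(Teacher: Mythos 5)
Your proposal is correct in substance and lands on the same central identity as the paper, namely $f(\alpha_1,\alpha_2)=q^{-1}\sum_m S(q,a_1+m,a_2)J_m$ with $J_m=I(\beta_1-m/q,\beta_2)$ (this is \eqref{26}), and it uses the same arithmetic input: the bound $|S(q,a_1+m,a_2)|\ll (q,a_2)^{1/2}q^{1/2}$ together with the vanishing of $S$ unless $(q,a_2)\mid(a_1+m)$, which is exactly the content of Lemmata \ref{L23} and \ref{L24}. Where you differ is the analytic execution. The paper never meets an infinite Poisson tail: it applies Vaughan's truncated summation formula \cite[Lemma 2.3]{gen} to each $f(\beta_1-b/q,\beta_2)$ \emph{before} interchanging sums, so the $m$-sum is finite from the outset. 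It then splits into two regimes: for $|\beta_2|\le(5qX)^{-1}$ it uses the explicit boundary-term asymptotic for $I(\beta_1-n/q,\beta_2)$ from \cite[p.~445]{gen}, and the resulting harmonic sum over $0<|n|<q$ produces the $\log q$; for $|\beta_2|>(5qX)^{-1}$ it abandons the expansion entirely and bounds $f$ and the main term separately via \cite[Theorem 4]{gen} and $I\ll|\beta_2|^{-1/2}$. You instead treat all $\beta_2$ uniformly by stationary phase, which is more self-contained but forces you to confront the tail. Your reading of the hypothesis $|\beta_1|\le 1/(2q)$ (it guarantees that $m=0$ is the distinguished frequency) matches the paper's use of it.

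One step needs repair: your tail option (ii) does not work as stated. Repeated integration by parts improves the \emph{integral} remainder, but the boundary terms $e(\Phi(v))/(2\pi i\Phi'(v))$ evaluated at $v=1$ and $v=X$, produced already by the first integration by parts, are genuinely of size $\asymp q/|m|$ and do not improve under further integration by parts; so $\sum_{|m|>M}|J_m|$ still diverges. (In the paper these same boundary terms appear, but inside a sum truncated at $|n|<q+2$, which is why the divergence never arises.) Your option (i), smoothing at scale $1/q$, is the standard correct fix, perturbing $f$ and $q^{-1}SI$ by $O(1)$; alternatively one can sum the boundary terms exactly over $m$ as a sawtooth-type series. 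Two minor points: near the edges of the stationary window the bound $q/|m-qc_\pm|$ degenerates, so the $O(1)$ values of $m$ closest to $qc_\pm$ must be absorbed into the stationary count with the bound $\min(X,|\beta_2|^{-1/2})$; and your stationary-regime accounting does close up (in fact the divisibility restriction $(q,a_2)\mid(a_1+m)$ is not even needed there, since $\#\{m\}\ll 1+qX|\beta_2|$ already suffices).
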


The result is similar to \cite[Theorem 8]{gen} where it is required that $(a_2,q)=1$. The factor $\log q$ is absent in \cite{gen}. This factor turns out to do no harm to our applications; it is more relevant to remove the coprimality constraint on $a_2$. We give a detailed proof.

\begin{lemma}\label{L22} Let $a_1,a_2\in\mathbb Z$ and  $q\in\mathbb N$. Then
$$ \lim_{X\to\infty} X^{-1} f(a_1/q,a_2/q) = q^{-1}S(q,a_1,a_2). $$
\end{lemma}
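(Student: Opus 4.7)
The plan is to exploit the periodicity of the summand. The map $x \mapsto e\bigl((a_1 x + a_2 x^2)/q\bigr)$ depends only on $x$ modulo $q$: if $x = r + qy$ then
$$a_1 x + a_2 x^2 = a_1 r + a_2 r^2 + q\bigl(a_1 y + 2 a_2 r y + a_2 q y^2\bigr),$$
and the parenthesised expression is an integer, so the exponentials agree.

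With this in hand, I would split the range of summation in $f(a_1/q, a_2/q)$ into residue classes modulo $q$. For each residue $r \in \{1, \dots, q\}$, the number of integers $x$ with $1 \le x \le X$ and $x \equiv r \pmod q$ equals $X/q + O(1)$, with an implicit constant that is absolute. Since $|e(\cdot)| = 1$, the contribution of each residue class to $f(a_1/q, a_2/q)$ is
$$e\!\left(\frac{a_1 r + a_2 r^2}{q}\right)\!\bigl(X/q + O(1)\bigr).$$
Summing over $r$ from $1$ to $q$ yields
$$f(a_1/q, a_2/q) = \frac{X}{q}\, S(q, a_1, a_2) + O(q),$$
after recognising the character sum from the definition \eqref{22}. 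Dividing by $X$ and letting $X \to \infty$ gives the claimed limit $q^{-1} S(q, a_1, a_2)$.

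There is no serious obstacle: the whole argument is a one-line periodicity observation combined with counting lattice points in an arithmetic progression. The only mild care needed is in handling the final partial residue class when $X$ is not an integer multiple of $q$ (which contributes only $O(q)$, absorbed by the error term), and in checking that the expansion of $(r + qy)^2$ contributes only terms divisible by $q$ once one factor of $q$ is divided out.
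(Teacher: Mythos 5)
Your argument is correct and is essentially identical to the paper's proof: both sort the sum into residue classes modulo $q$, use periodicity of the summand, and count $X/q + O(1)$ terms per class before dividing by $X$ and letting $X\to\infty$. No issues.
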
  

\begin{proof} We sort the sum in \eqref{21} into residue classes modulo $q$. Then
$$ f\Big(\frac{a_1}q, \frac{a_2}q\Big) = \sum_{b=1}^q \multsum{x\le X}{x\equiv b\bmod q} e\Big(\frac{a_1 b+a_2b^2}q\Big) =   \sum_{b=1}^q e\Big(\frac{a_1 b+a_2b^2}q\Big) \Big(\frac{X}{q} +O(1)\Big), $$
and the result follows from \eqref{22}.\end{proof}

Given $a_1,a_2,q$, we may cancel $(a_1,a_2,q)=d$ and put $q' = q/d$, $a'_j=a_j/d$. Then $(a'_1,a'_2,q')=1$, and $f(a_1/q,a_2/q) = f(a'_1/q',a'_2/q')$. Two applications of Lemma \ref{L22} now show that
\begin{equation}\label{24} q^{-1}S(q,a_1,a_2)= q^{\prime -1} S(q',a'_1,a'_2). \end{equation}

\begin{lemma}\label{L23} If $(a_1,a_2,q)=1$, then $S(q,a_1,a_2)\ll q^{1/2}$. If in addition $(q,a_2)>1$, then $S(q,a_1,a_2)=0$. \end{lemma}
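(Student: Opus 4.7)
The plan is to treat both assertions in tandem. I would dispose of the vanishing claim first by a translation argument, and then obtain the $O(q^{1/2})$ bound via Weyl squaring in the remaining range $(a_2,q)=1$.

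For the vanishing statement, suppose a prime $p$ divides $(q,a_2)$. The hypothesis $(a_1,a_2,q)=1$ immediately forces $p\nmid a_1$. Now apply the substitution $x\mapsto x+q/p$ inside $S(q,a_1,a_2)$; since $p\mid q$, this is a bijection of $\ZZ/q\ZZ$ and leaves the sum invariant. Expanding $a_1(x+q/p)+a_2(x+q/p)^2$ and dividing by $q$, the cross term $2a_2x(q/p)/q=2a_2x/p$ is an integer because $p\mid a_2$; the pure quadratic term $a_2(q/p)^2/q=a_2q/p^2$ is an integer in both cases $p^2\mid q$ (obvious) and $p\|q$ (where $a_2/p$ and $q/p$ are both integers); only the linear piece survives modulo $1$ and contributes a factor $e(a_1/p)$. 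Thus
$$S(q,a_1,a_2)=e(a_1/p)\,S(q,a_1,a_2),$$
and $p\nmid a_1$ forces $S(q,a_1,a_2)=0$.

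For the upper bound, the vanishing statement just proved reduces matters to the case $(a_2,q)=1$. Here I would apply Weyl squaring:
$$|S(q,a_1,a_2)|^2=\sum_{x,y\bmod q} e\Big(\tfrac{a_1(x-y)+a_2(x^2-y^2)}{q}\Big).$$
Setting $h=x-y$ and using $x^2-y^2=h(h+2y)$ decouples the $y$-sum to a complete additive character sum, equal to $q$ when $q\mid 2a_2h$ and $0$ otherwise. Consequently,
$$|S(q,a_1,a_2)|^2\le q\cdot\#\{h\bmod q:q\mid 2a_2h\}=q(q,2a_2).$$
Under $(a_2,q)=1$, the gcd reduces to $(q,2)\le 2$, and I conclude $|S(q,a_1,a_2)|\le\sqrt{2q}\ll q^{1/2}$.

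The main obstacle is the modular bookkeeping in the translation step, in particular verifying that $a_2q/p^2\in\ZZ$ uniformly across the cases $p^2\mid q$ and $p\|q$ (the latter requiring the extra factor of $p$ from $a_2$), and that $p=2$ is handled simultaneously with odd primes. Beyond this, the argument is routine.
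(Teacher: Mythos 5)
Your proof is correct, but it follows a genuinely different route from the paper on both clauses. For the vanishing statement the paper first invokes the quasi-multiplicativity of $S(q,a_1,a_2)$ (\cite[Lemma 4.1]{hlm}) to isolate a prime-power factor $S(p^t,b_1,p^sb_2)$ with $p\nmid b_1b_2$, and then kills that factor by the substitution $x=p^{t-s}y+z$, which splits off a vanishing complete sum $\sum_y e(b_1y/p^s)$; your translation $x\mapsto x+q/p$, yielding $S=e(a_1/p)S$ with $e(a_1/p)\neq 1$, reaches the same conclusion in one step and avoids the multiplicativity machinery entirely (and your case check that $a_2q/p^2\in\ZZ$, using $p\mid a_2$ when $p\parallel q$, is exactly the point that needs verifying and is handled correctly). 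For the upper bound, both arguments first use the vanishing clause to reduce to $(a_2,q)=1$, but the paper then simply cites \cite[Lemma 2.1]{gen}, whereas you give a self-contained Weyl-squaring proof with the explicit constant $|S|^2\le q(q,2a_2)\le 2q$. Your version is more elementary and self-contained; the paper's is shorter on the page because it outsources the standard Gauss-sum bound to the literature. No gaps.
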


\begin{proof}
We establish the second clause first. Let $p$ be a prime with $p\mid (q,a_2)$, and define $s$ and $t$ via $p^t\parallel  q$, $p^s\parallel (q,a_2)$. Then $1\le s\le t$. We now apply the quasi-multiplicative properties of $S(q,a_1,a_2)$ described in \cite[Lemma 4.1]{hlm}. In this way we see that $S(q,a_1,a_2)$ has a factor
$S(p^t,b_1,p^sb_2)$ where the integers $b_1,b_2$ are not divisible by $p$. Note here that $p\mid a_2$ gives $p\nmid a_1$, and hence $p\nmid b_1$. Now, in
$$ S(p^t,b_1,p^sb_2) = \sum_{x=1}^{p^t} e\Big( \frac{b_1x+p^sb_2 x^2}{p^t}\Big)$$
we put $x=p^{t-s}y+z$ and get
$$ S(p^t,b_1,p^sb_2) = \sum_{z=1}^{p^{t-s}} \sum_{y=1}^{p^s} e\Big(\frac{b_1y}{p^s}\Big) e\Big(\frac{b_1 z}{p^t} + \frac{b_2z^2}{p^{t-s}}\Big) = 0. $$
This proves the second clause. With this in hand, it suffices to establish the first clause in the case where $(a_2,q)=1$, and this situation is fully covered by \cite[Lemma 2.1]{gen}. 
\end{proof}

\begin{lemma} \label{L24}Let $a_1,a_2\in\mathbb Z$ and $q\in\mathbb N$. Then
$S(q,a_1,a_2) \ll q^{1/2} (q,a_1,a_2)^{1/2}. $
\end{lemma}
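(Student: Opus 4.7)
The plan is to reduce to the coprime case already handled by Lemma \ref{L23}. Set $d=(q,a_1,a_2)$ and write $q=dq'$, $a_j=da_j'$, so that $(a_1',a_2',q')=1$. The identity \eqref{24} established just above, namely
$$ q^{-1}S(q,a_1,a_2) = (q')^{-1}S(q',a_1',a_2'), $$
rearranges to $S(q,a_1,a_2)=d\,S(q',a_1',a_2')$. Applying Lemma \ref{L23} to the right-hand factor, which is legitimate because $(a_1',a_2',q')=1$, gives $S(q',a_1',a_2')\ll (q')^{1/2}=(q/d)^{1/2}$. Combining these two relations yields
$$ S(q,a_1,a_2)\ll d\,(q/d)^{1/2} = d^{1/2}q^{1/2} = (q,a_1,a_2)^{1/2}q^{1/2}, $$
which is exactly the claim.

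I expect no real obstacle here: the entire content of the lemma is that the bound of Lemma \ref{L23}, valid in the primitive case, can be inflated by a factor $(q,a_1,a_2)^{1/2}$ in the imprimitive case, and this is forced by the fact that dividing out the common factor $d$ shrinks the modulus from $q$ to $q/d$ while multiplying the sum itself by exactly $d$. The only subtlety worth flagging is that Lemma \ref{L23} is applied to the reduced tuple $(a_1',a_2',q')$ rather than to $(a_1,a_2,q)$ directly; without the identity \eqref{24}, one could not hope to recover the $d^{1/2}$ saving, since the naive bound coming from the trivial splitting of the sum into residue classes would merely give $S(q,a_1,a_2)\ll d\cdot (q/d)^{1/2}\cdot(q,a_2)^{1/2}/\ldots$, which is weaker.
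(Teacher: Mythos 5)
Your proof is correct and is exactly the paper's argument: remove the common factor $d=(q,a_1,a_2)$ via the identity \eqref{24} and apply Lemma \ref{L23} to the reduced tuple, yielding $S(q,a_1,a_2)=d\,S(q',a_1',a_2')\ll d(q/d)^{1/2}=q^{1/2}(q,a_1,a_2)^{1/2}$. No issues.
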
 

\begin{proof}
Remove common factors in $a_1,a_2,q$ by \eqref{24} and apply the estimate provided by Lemma \ref{L23}.
\end{proof}

We are ready to embark on the proof of Theorem \ref{quadWeyl}. In the notation introduced in the statement of this theorem, we infer from \eqref{21}, \eqref{22} and orthogonality that
$$ f(\alpha_1,\alpha_2) = \frac1{q} \sum_{b=1}^q S(q,a_1+b,a_2)f\Big(\beta_1-\frac{b}{q}, \beta_2\Big). $$
We may now apply \cite[Lemma 2.3]{gen} to the effect that
$$ f(\alpha_1,\alpha_2) =
\frac1{q} \sum_{b=1}^q S(q,a_1+b,a_2)
\Bigg(\!\multsum{m\in\mathbb Z}{|m+(b/q) -\beta_1|\le 1+4|\beta_2|X}\!\! I\Big(\beta_1 -\frac{b}{q} -m, \beta_2\Big) + O(1)\! \Bigg). $$
By Lemma \ref{L24}, the right hand side here  simplifies to
$$
\frac1{q} \sum_{b=1}^q S(q,a_1+b,a_2)
\multsum{m\in\mathbb Z}{|m+(b/q) -\beta_1|\le 1+4|\beta_2|X}\!\! I\Big(\beta_1 -\frac{b}{q} -m, \beta_2\Big) + O(q^{1/2}(q,a_2)^{1/2}).$$
The transformation $n=mq+b$ allows us to recast the last estimate as
\begin{multline}\label{26}f(\alpha_1,\alpha_2) =\\
\frac1{q}\multsum{n\in\mathbb Z}{|(n/q) -\beta_1|\le 1+4|\beta_2|X} S(q,a_1+n,a_2)I\Big(\beta_1 -\frac{n}{q}, \beta_2\Big) + O(q^{1/2}(q,a_2)^{1/2}).\end{multline}
Recall that $|\beta_1|\le 1/(2q)$. Hence,  the interval for $n$ in the summation condition on the right hand side contains $n=0$, and this corresponds to $q^{-1}S(q,a_1,a_2)I(\beta_1,\beta_2)$, the term that is to be subtracted from $f(\alpha_1,\alpha_2)$. It  remains to show that the contribution of terms with $n\neq0$ is small. 

We temporarily suppose that $|\beta_2|\le (5qX)^{-1}$. With this extra assumption, it is shown on p.\ 445 of \cite{gen} that whenever $n\neq 0$, one has
$$ I\Big(\beta_1 -\frac{n}{q}, \beta_2\Big) = \frac{e(-un/q)}{-2\pi\mathrm i n/q} e(\beta_1 u+\beta_2u^2) \Bigg|_{u=0}^X + O\Big(\frac{q}{n^2}\Big). $$
This may be implanted into \eqref{26}. The error terms $O(q/n^2)$ generate an error
in \eqref{26} that in light of Lemma \ref{L24} does not exceed
$$ \ll \sum_{n\neq 0} \frac{|S(q,a_1+n, a_2)|}{n^2} \ll q^{1/2} (q,a_2)^{1/2}. $$
We now have
$$ f(\alpha_1,\alpha_2)-q^{-1}S(q,a_1,a_2)I(\beta_1,\beta_2) = R+O(q^{1/2} (q,a_2)^{1/2}) $$
where
\begin{equation}\label{27} R= \multsum{n\neq 0}{|(n/q) -\beta_1|\le 1+4|\beta_2|X} S(q,a_1+n,a_2)
\frac{e(-un/q)}{-2\pi\mathrm i n} e(\beta_1 u+\beta_2u^2) \Bigg|_{u=0}^X. \end{equation}
According to current hypotheses, we have $q|\beta_1|\le \frac12$, so the range 
$0<|n| < q$ is always included in the summation condition in   \eqref{27}. However, we also have $|\beta_2| \le (5qX)^{-1}$, so if $ |(n/q) -\beta_1|\le 1+4|\beta_2|X$,
then $|n|\le q+2$. In the expression for $R$ we keep the terms with $0<|n| < q$, and estimate the remaining summands, of which there may be at most 6, trivially, only using that $|n|\ge q$ now. This gives
$$ R= \sum_{0<|n|<q} S(q,a_1+n,a_2)
\frac{e(-un/q)}{-2\pi\mathrm i n} e(\beta_1 u+\beta_2u^2) \Bigg|_{u=0}^X +O(1). $$
We now find via Lemma \ref{L24} that
$$ R \ll \sum_{0<|n|<q} \frac{|S(q,a_1+n,a_2)|}{|n|} \ll q^{1/2}(q,a_2)^{1/2}
\sum_{1\le n\le q} \frac1n . $$
Subject to the additional hypothesis $|\beta_2|\le (5qX)^{-1}$, the proof of the theorem is now complete.

We may now suppose that $|\beta_2|> (5qX)^{-1}$. In this case, the conclusion of the theorem is an easy consequence of \cite[Theorem 4]{gen}. To see this, write
$a_2/q$ in lowest terms, say $a_2/q=a_0/q_0$. Then $q_0=q/(q,a_2)$, and the aforementioned Theorem 4 of Vaughan \cite{gen} gives
\begin{align*} f(\alpha_1,\alpha_2)& \ll X(q_0 +X^2|q_0\alpha_2-a_0|)^{-1/2}
+ (q_0+X^2|q_0\alpha_2-a_0|)^{1/2}\\
& \ll Xq_0^{-1/2} (1+X^2|\beta_2|)^{-1/2} + q_0^{1/2} (1+X^2|\beta_2|)^{1/2}.
\end{align*}
Now from $|\beta_2|> (5qX)^{-1}$ we see that $|q\beta_2|^{-1/2} \ll (qX^2|\beta_2|)^{1/2}$, and so the previous estimate for $f(\alpha_1,\alpha_2)$ reduces to
$$ f(\alpha_1,\alpha_2) \ll q^{1/2}(q,a_2)^{1/2} (1+X^2|\beta_2|)^{1/2}. $$
To complete the argument, we invoke the bound $I(\beta_1,\beta_2)\ll |\beta_2|^{-1/2}$ that is valid for all $\beta_1,\beta_2$ with $\beta_2\neq 0$ (see \cite[Lemma 2.2]{gen}). But then again, by Lemma \ref{L24},
$$ q^{-1}S(q,a_1,a_2)I(\beta_1,\beta_2) \ll (q,a_2)^{1/2} q^{-1/2} |\beta_2|^{-1/2}
\ll (qX^2|\beta_2|)^{1/2} (q,a_2)^{1/2}, $$
and the case $|\beta_2|> (5qX)^{-1}$ is now established, too. 

\section{Major arcs: first application of Poisson summation}
From now on, our main parameter is $P$, and we continue to write
$$ H= \sqrt P. $$
We consider the exponential sum 
\begin{equation}\label{35} G(\alpha)=G(\alpha;X,Y)= \sum_{h\le Y}\sum_{X<x\le 2X} e(\alpha  h(3x^2+3xh +h^2))\end{equation}
that relates to \eqref{33} via 
 M\"obius inversion. This results into the identity
\begin{equation}\label{36} F_w(\alpha) = \sum_{d\mid w} \mu(d) G(\alpha d^3; P/d,H/d). \end{equation}
Note here that for $d>H$ one has $G(\beta,P/d,H/d)=0$ so that implicitly the sum over $d$ is constrained to $d\le H$. We shall invoke this comment frequently and silently. We require all our estimates uniformly relative to the auxiliary natural number $w$. 
Note here that primes $p$ with $p>H$ are coprime to all $h\le H$. Hence, there is no loss of generality in assuming henceforth that $w$ is a number free of prime factors exceeding $H$.

The goal of this section is to evaluate the quadratic Weyl sum hidden as the sum over $x$ in \eqref{35}, by Poisson summation. We are able to import this step through the work in Section 2.  
In the remainder of this and the next two sections, we suppose that $\alpha\in\mathfrak M$. Then $\alpha\in [0,1]$, and there are coprime integers $a$ and $q$ with
\begin{equation}\label{maj} 0\le a\le q\le P \quad \text{ and } \quad\alpha = a/q +\beta, \quad q|\beta|\le (6HP)^{-1}. \end{equation}
We use this notation throughout.

As a first step, we rearrange $F_w(\alpha)$ to prepare for an application of 
Theorem \ref{quadWeyl}. It will be convenient to write
\begin{equation}\label{37} g(\alpha_1,\alpha_2;X) = \sum_{X<x\le 2X} e(\alpha_1 x+\alpha_2 x^2), \quad J(\beta_1,\beta_2;X) = \int_X^{2X} e(\beta_1 u+\beta_2 u^2)\,\dd u, \end{equation}
and we abbreviate $g(\alpha_1,\alpha_2;P)$ to $g(\alpha_1,\alpha_2)$, and 
$J(\beta_1,\beta_2;P)$ to $J(\beta_1,\beta_2)$. By \eqref{35} and \eqref{36}, we have
\begin{equation}\label{38} F_w(\alpha) = \sum_{d\mid w} \mu(d) \sum_{h\le H/d} e(\alpha d^3h^3)
g(3\alpha d^3h^2, 3\alpha d^3h; P/d). \end{equation}
Recall the Gauss sum $S$ introduced in \eqref{22}.

\begin{lemma}\label{L34} Let $\alpha\in\mathfrak M$  and suppose that \eqref{maj} applies. Then, uniformly in $w$, one has
$$ F_w(\alpha) = \sum_{d\mid w}\mu(d) \sum_{h\le H/d} e(\alpha d^3 h^3)
\frac{S(q,3ad^3h^2,3ad^3h)}{qd} J(3h^2d^2\beta, 3hd\beta) + O(P^{1+\varepsilon}).$$ 
\end{lemma}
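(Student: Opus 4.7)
The plan is to apply Theorem \ref{quadWeyl} to the inner quadratic Weyl sum $g(3\alpha d^3 h^2, 3\alpha d^3 h; P/d)$ in \eqref{38} for each pair $(d,h)$, then rescale the resulting oscillatory integral by the substitution $u = v/d$ to bring it into the form $J(3d^2 h^2\beta, 3dh\beta)$ demanded by the lemma.

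Fix $d\mid w$ with $d\le H$ and $h\le H/d$. I write $3\alpha d^3 h^2 = 3ad^3h^2/q + \beta_1$ and $3\alpha d^3 h = 3ad^3h/q + \beta_2$, with $\beta_1 = 3d^3h^2\beta$, $\beta_2 = 3d^3h\beta$. To secure a usable error I apply Theorem \ref{quadWeyl} not at modulus $q$ but at the reduced modulus $q'' := q/r_{d,h}$, where $r_{d,h} := (q,3d^3h)$. Since $(a,q)=1$ gives $(q,3ad^3h^2,3ad^3h)=r_{d,h}$, the identity \eqref{24} shows the main term is unchanged: $(q'')^{-1}S(q'',3ad^3h^2/r_{d,h},3ad^3h/r_{d,h}) = q^{-1}S(q,3ad^3h^2,3ad^3h)$. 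The hypothesis $|\beta_1|\le 1/(2q'')$ follows from the stronger bound $|\beta_1|\le 1/(2q)$, which in turn follows from $d^3h^2\le dP$ (using $h\le H/d$) together with $q|\beta|\le(6HP)^{-1}$, provided $d\le H$. Two applications of Theorem \ref{quadWeyl}, at upper limits $2P/d$ and $P/d$, together yield
\[ g(3\alpha d^3h^2,3\alpha d^3h;P/d) = q^{-1}S(q,3ad^3h^2,3ad^3h)\,J(\beta_1,\beta_2;P/d) + O(\text{error}). \]

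The substitution $u=v/d$ in the defining integral gives $J(3d^3h^2\beta,3d^3h\beta;P/d) = d^{-1}J(3d^2h^2\beta,3dh\beta)$, so after multiplication by $\mu(d) e(\alpha d^3h^3)$ and summation over $d,h$ the main term coincides exactly with the one stated in the lemma.

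The main obstacle is the error. A naive use of Theorem \ref{quadWeyl} at modulus $q$ gives error $\ll (q,3d^3h)^{1/2} P^{1/2+\varepsilon}$ per $(d,h)$, which when summed over all divisors $d\mid w$ with $d\le H$ becomes too large, since $w$ may have many prime factors in common with $q$. Switching to modulus $q''$ is decisive: because $(q,a_2)=(q,3d^3h)=r_{d,h}$ one has $(q'',a_2/r_{d,h})=1$, so the error from Theorem \ref{quadWeyl} reduces to $\ll (q''+q''X^2|\beta_2|)^{1/2}\log q$ with $X=P/d$. The bound $X^2|\beta_2|=3P^2dh|\beta|\le P/(2q)$, which uses $dh\le H$ and $q|\beta|\le(6HP)^{-1}$, yields $q''(1+X^2|\beta_2|)\le (q+P/2)/r_{d,h}\ll P/r_{d,h}$. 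Thus the per-term error is $\ll (P/r_{d,h})^{1/2}\log P$. Dropping $r_{d,h}\ge 1$ and summing,
\[ \sum_{d\mid w,\;d\le H}\;\sum_{h\le H/d}\Big(\frac{P}{r_{d,h}}\Big)^{1/2}\log P \;\ll\; P^{1/2}\log P\sum_{d\mid w,\;d\le H}\frac{H}{d}\;\ll\; P^{1+\varepsilon}, \]
which completes the proof.
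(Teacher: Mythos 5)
Your proof is correct and follows essentially the same route as the paper: apply Theorem \ref{quadWeyl} to the inner sum in \eqref{38} after cancelling a common factor from the fraction $3ad^3h^j/q$, restore the main term to modulus $q$ via \eqref{24}, and rescale the integral by $u=v/d$. The only difference is that you cancel the full gcd $(q,3d^3h)$ so that the factor $(q,a_2)^{1/2}$ in the error of Theorem \ref{quadWeyl} becomes $1$ and the sum over $h$ is then trivial, whereas the paper cancels only $(q,d^3)$ and disposes of the residual factor $(q_0,3h)^{1/2}\ll (q,h)^{1/2}$ by a divisor-sum estimate; both variants are valid.
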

 
\begin{proof}
For $j=1$ and $2$ we have
$$ 3\alpha d^3 h^j = \frac{3ad^3h^j}{q} + 3h^jd^3\beta. $$
Here we cancel  common factors between $q$ and $d^3$. On writing $q_0=q/(q,d^3)$ and $d_0=d^3/(q,d^3)$, we have $(q_0,d_0)=1$ and
$$    3\alpha d^3 h^j = \frac{3ah^jd_0}{q_0} + 3h^jd^3\beta \quad (j=1,2). $$
We wish to apply Theorem \ref{quadWeyl} with $X=2P/d$ and $X=P/d$ to evaluate the sum $g(3\alpha d^3h^2, 3\alpha d^3h; P/d)$. We should then choose 
$$\alpha_1= 
3\alpha d^3 h^2, \quad \alpha_2 =3\alpha d^3 h, \quad \beta_1=3 d^3 h^2 \beta\quad \beta_2 = 3 d^3 h\beta. $$
Recalling that $hd\le H$, we see that $|\beta_1|\le 3H^3/(6HPq)=1/(2q)$. Theorem \ref{quadWeyl} is therefore applicable in the way we desired, and we infer that
\begin{equation}\label{39} 
  g(3\alpha d^3h^2, 3\alpha d^3h; P/d) = q_0^{-1} S(q_0, 3ah^2d_0, 3ahd_0) J(3 d^3 h^2 \beta,3 d^3 h\beta;P/d) + E \end{equation}
where
$$ E \ll (q_0,h)^{1/2} (q_0 +q_0 P^2dh |\beta|)^{1/2} \log q. $$
But $q_0\mid q$,  $q \le P$ and $|\beta|\le 1/(6HPq)$ simplify this bound to
$ E\ll (q,h)^{1/2} P^{1/2+\varepsilon}$. If one inserts the expansion \eqref{39} to \eqref{38}, the errors $E$ sum to an amount no larger than
$$ P^{1/2+\varepsilon} \sum_{d\le H} \sum _{h\le H/d} (q,h)^{1/2} \ll P^{1+2\varepsilon}. $$
We have now proved that
\begin{multline*}F_w(\alpha) = \\
\sum_{d\mid w}\mu(d) \sum_{h\le H/d} e(\alpha d^3 h^3) \f{S(q_0, 3ah^2d_0, 3ahd_0)}{q_0}J(3 d^3 h^2 \beta,3 d^3 h\beta;P/d) +O(P^{1+\varepsilon}).\end{multline*}
By Lemma \ref{L23}, we have 
$$ q_0^{-1} S(q_0, 3ah^2d_0, 3ahd_0)= q^{-1} S(q,3ad^3h^2, 3ad^3h). $$
Further, the substitution $u=v/d$ in \eqref{37} gives 
$$ J(3 d^3 h^2 \beta,3 d^3 h\beta;P/d) = d^{-1} J(3h^2d^2\beta, 3hd\beta). $$
Collecting all this together, one arrives at the claim of the lemma.
\end{proof} 

Further progress now depends on an upper bound for the integral $J(\beta_1,\beta_2)$.

\begin{lemma}\label{LJ} Let $\beta_1\ge 0$, $\beta_2>0$. Then $J(\beta_1,\beta_2)\ll 1/(P\beta_2)$.
\end{lemma}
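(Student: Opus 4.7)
The plan is to obtain the bound by a single integration by parts, exploiting that the phase $\phi(u)=\beta_1 u+\beta_2 u^2$ has a derivative $\phi'(u)=\beta_1+2\beta_2 u$ which is strictly positive and monotone increasing on $[P,2P]$ under the hypotheses $\beta_1\ge 0$ and $\beta_2>0$. In particular $\phi'(u)\ge 2\beta_2 P$ throughout the range of integration, which is the key quantitative input.

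First I would write
\[
J(\beta_1,\beta_2)=\int_P^{2P}\frac{1}{2\pi\mathrm i\,\phi'(u)}\,\frac{\mathrm d}{\mathrm du}\,e(\phi(u))\,\mathrm du,
\]
and integrate by parts to produce a boundary term plus the integral of $(2\pi\mathrm i)^{-1}e(\phi(u))\phi''(u)/\phi'(u)^2$. The boundary contribution is bounded trivially by $1/\phi'(P)+1/\phi'(2P)$, which is $O(1/(\beta_2 P))$ since $\phi'(P)\ge 2\beta_2 P$. For the remaining integral I would use $\phi''(u)=2\beta_2>0$ and the monotonicity of $\phi'$ to substitute $v=\phi'(u)$, giving
\[
\int_P^{2P}\frac{2\beta_2}{\phi'(u)^2}\,\mathrm du=\int_{\phi'(P)}^{\phi'(2P)}\frac{\mathrm dv}{v^2}\le\frac{1}{\phi'(P)}\ll\frac{1}{\beta_2 P}.
\]
Combining the two contributions yields the stated bound.

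There is no real obstacle here: the argument is the standard first-derivative test, and the only point to check carefully is that $\phi'$ does not vanish on $[P,2P]$, which is precisely what the assumptions $\beta_1\ge 0$ and $\beta_2>0$ guarantee. No assumption on the size of $\beta_1$ or $\beta_2$ relative to $P$ is required for the argument to work; the bound only becomes nontrivial when $\beta_2 P\gg 1$, but the inequality itself holds in all ranges.
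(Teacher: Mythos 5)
Your argument is correct and is essentially the paper's own proof: both rewrite $J$ as $\int_P^{2P}(2\pi\mathrm i\,\phi'(u))^{-1}\frac{\mathrm d}{\mathrm du}e(\phi(u))\,\mathrm du$, using that $\phi'(u)=\beta_1+2\beta_2u\gg\beta_2P$ on the range of integration, and conclude by integration by parts. The only difference is that you spell out the boundary and remainder estimates that the paper leaves implicit.
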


\begin{proof}
We rewrite the integral in \eqref{37} as
$$ J(\beta_1,\beta_2) = \int_P^{2P} \frac{1}{2\pi\mathrm i (\beta_1+2\beta_2 u)} \,\frac{\mathrm d}{\mathrm d u} \, e(\beta_1 u+\beta_2 u^2)\dd u. $$
This is possible because $\beta_1+2\beta_2 u \ge \beta_2 P$ holds for all $u\ge P$. Integration by parts confirms the claim of the lemma.
\end{proof}

Of course, one also has the trivial bound $|J(\beta_1,\beta_2)|\le P$. 
As an immediate corollary of Lemma \ref{LJ}, we have the bound
\begin{equation}\label{310} J(3h^2d^2\beta, 3hd\beta) \ll P(1+P^2hd|\beta|)^{-1}  \end{equation}
for all $\beta\in \mathbb R$. Indeed, this follows from Lemma \ref{LJ} combined with the trivial bound when $\beta \ge 0$, and when 
$\beta<0$ one observes that 
$J(-\gamma,-\theta)= \overline{J(\gamma,\theta)}$ to reduce to the case where $\beta>0$.

Equipped with the estimate \eqref{310}, it is possible to truncate the sum over the divisors of $w$ in the formula for $F_w(\alpha)$ that
Lemma \ref{L34} provides. When $\alpha\in\mathfrak M$ and \eqref{maj} applies, define the set
\begin{equation}\label{313} D=D(\alpha, w)= \{d\in\NN: \mu(d)^2=1, \,d\mid w, \, d\le H,\, d|\beta|\le (48P^2)^{-1}\} \end{equation}
and the sum
\begin{equation}\label{314-} F^*_w(\alpha) =  
\sum_{d\in D} \frac{\mu(d)}{qd} \sum_{h\le H/d} e(\alpha d^3 h^3)
S(q,3ad^3h^2,3ad^3h) J(3h^2d^2\beta, 3hd\beta).
\end{equation}

\begin{lemma}\label{L36} Let $\alpha\in\mathfrak M$, and suppose that \eqref{maj} applies. Then, uniformly in $w$, one has $F_w(\alpha)=F^*_w(\alpha) + O(P^{1+\varepsilon})$.
\end{lemma}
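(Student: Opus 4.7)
My plan is to apply Lemma \ref{L34} directly and show that truncating the $d$-sum to $d\in D$ introduces an error of only $O(P^{1+\varepsilon})$. Writing
$$ T = \multsum{d\mid w,\, d\le H}{d\notin D} \frac{1}{qd} \sum_{h \le H/d} \big|S(q, 3ad^3h^2, 3ad^3h)\big|\,\big|J(3h^2d^2\beta, 3hd\beta)\big|, $$
it suffices to prove that $T \ll P\log P$. If $|\beta| \le (48P^2H)^{-1}$, then by \eqref{313} every admissible $d$ already lies in $D$ and there is nothing to prove, so I may assume $|\beta| > (48P^2H)^{-1}$, making the index set above nonempty and, incidentally, the quantity $1/|\beta|$ well-defined.

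I would next apply the trivial Gauss-sum bound $|S(q,a_1,a_2)|\le q$ together with the integral estimate \eqref{310} to obtain
$$ T \ll \multsum{d\mid w,\, d\le H}{d\notin D} \frac{1}{d} \sum_{h \le H/d} \frac{P}{1 + P^2 h d|\beta|}. $$
The cutoff in \eqref{313} is designed precisely so that for $d\notin D$ with $d\mid w$, $d\le H$ one has $dP^2|\beta| > 1/48$, whence $1+P^2hd|\beta| \asymp P^2 hd|\beta|$ for every $h\ge 1$. After substituting, the inner $h$-sum is harmonic and contributes $O(\log P)$, while the outer tail obeys
$$ \multsum{d\mid w,\, d\le H}{d\notin D} \frac{1}{d^2} \;\le\; \sum_{d > (48P^2|\beta|)^{-1}} \frac{1}{d^2} \;\ll\; P^2|\beta|. $$
Combining these estimates with the prefactor $1/(P|\beta|)$ pulled out of the $J$-bound yields $T \ll P\log P$, which is comfortably $O(P^{1+\varepsilon})$.

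The argument is size-only bookkeeping: no cancellation in $S$ or in $J$ is invoked. The only real design choice is the threshold $d|\beta| \le (48P^2)^{-1}$ in the definition of $D$, which is calibrated so that the trivial Gauss-sum bound and the pointwise bound \eqref{310} together produce a tail of the same order as the error in Lemma \ref{L34}. I therefore do not foresee any substantive obstacle; the proof amounts to the three displays above plus the trivial check separating the case $|\beta|\le (48P^2H)^{-1}$.
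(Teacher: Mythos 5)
Your proposal is correct and follows essentially the same route as the paper: start from Lemma \ref{L34}, bound the Gauss sum trivially by $q$, apply \eqref{310} to $J$, and use the defining inequality $d|\beta|>(48P^2)^{-1}$ of the excluded divisors to make the tail $O(P\log P)$. The paper's bookkeeping is marginally different (it notes $1+P^2hd|\beta|\gg h$ and sums $\sum_{dh\le H}(dh)^{-1}\ll(\log P)^2$ rather than keeping the factor $1/(P|\beta|)$ and using the lower bound on $d$), but this is a cosmetic variation, not a different argument.
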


\begin{proof} The starting point is the evaluation of $F_w(\alpha)$ provided by Lemma \ref{L34}. We have to show that the contribution to the part of the sum displayed in Lemma \ref{L34} that arises from summands where  $d\mid w$ and $d|\beta|> (48P^2)^{-1}$ is negligible. For these $d$, one applies \eqref{310} to see that
$$ J(3h^2d^2\beta, 3hd\beta) \ll P(1+h)^{-1} \ll P/h. $$
The trivial bound $|S(q,a,b)|\le q$ is enough to conclude that the undesired divisors $d$ contribute to the right hand side of the expression for $F_w$ in Lemma \ref{L34} at most
$$  P\sum_{d|w} \sum_{h\le H/d} \frac{1}{hd} \le P \sum_{dh\le H} \frac{1}{hd} \ll P (\log P)^2, $$
as required.  
\end{proof} 

\section{Major arcs: second application of Poisson summation}
In this section we evaluate the sum over $h$ in \eqref{314-}. Again, our principal tool is a truncated version of the Poisson summation formula. We shall encounter the Fourier integral
\begin{equation}\label{321} 
 K(\beta) = \int_0^H\!\!\!\int_P^{2P} e(\beta(v^3+3uv^2+3u^2v))\,\mathrm du\,\mathrm dv \end{equation}
and Hua's exponential sum
\begin{equation}\label{322} U(q,a,b) = \sum_{z=1}^q e((az^3+bz)/q).\end{equation}
In the interest of brevity, we write $U(q,a,0)=U(q,a)$ for the cubic Gauss sum. {We continue to denote by $D$ the particular set of square-free numbers defined in \eqref{313}.}

\begin{lemma}\label{L37} Let $\alpha\in\mathfrak M$. Then, uniformly in $w$, one has
$$ F^*_w(\alpha) = \sum_{d\in D(\alpha,w)} \frac{\mu(d)}{d^2q^2 } |U(q,ad^3)|^2 K(\beta) + O(P^{1+\varepsilon}). $$ 
\end{lemma}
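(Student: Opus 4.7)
The plan is to apply Poisson summation to the sum over $h$ in \eqref{314-}, isolating the $k=0$ Fourier term as the main contribution. I would first absorb the outer phase into the integral: since $\alpha = a/q + \beta$ and $(dh+u)^3 - u^3 = d^3 h^3 + 3d^2 h^2 u + 3dh u^2$, one has
\begin{align*}
e(\alpha d^3 h^3)\, J(3h^2 d^2 \beta, 3hd\beta) &= e(ad^3 h^3/q)\, I_d(h),\\
I_d(t) &:= \int_P^{2P} e\bigl(\beta[(dt+u)^3 - u^3]\bigr)\,du.
\end{align*}
Setting $C(h) := e(ad^3h^3/q) S(q, 3ad^3h^2, 3ad^3h)$, which is $q$-periodic in $h$, the decomposition $h = qm + r$ with $1 \le r \le q$ gives $\sum_{h \le H/d} C(h)\, I_d(h) = \sum_{r=1}^q C(r) \sum_m I_d(qm+r)$, where the inner sum runs over $m \ge 0$ with $qm + r \le H/d$.

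The algebraic heart of the lemma is the identity $\sum_{r=1}^q C(r) = |U(q, ad^3)|^2$. Expanding $S$ via \eqref{22} and using $r^3 + 3r^2 z + 3rz^2 = (r+z)^3 - z^3$, the substitution $y \equiv r + z \pmod q$ in the resulting complete double sum separates it into $U(q, ad^3) \cdot \overline{U(q, ad^3)}$. On the analytic side, applying Poisson summation to $\sum_m I_d(qm+r)$, the $k = 0$ Fourier coefficient equals
$$q^{-1} \int_0^{H/d} I_d(t)\,dt = (qd)^{-1} \int_0^H \int_P^{2P} e\bigl(\beta[(v+u)^3 - u^3]\bigr)\,du\,dv = K(\beta)/(qd),$$
after substituting $v = dt$ and comparing with \eqref{321}. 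Combining this with the modular evaluation of $\sum_r C(r)$ and summing over $d \in D$ with the prefactor $\mu(d)/(qd)$ produces exactly the asserted main term.

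The hard part will be controlling the non-zero Fourier frequencies. For $k \ne 0$ the relevant coefficient is $\widehat{I}_d(k/q) = d^{-1} \int_0^H \int_P^{2P} e(\Psi)\,du\,dv$, where $\Psi := \beta[(v+u)^3-u^3] - kv/(qd)$. The derivative $\partial_v \Psi = 3\beta(v+u)^2 - k/(qd)$ is monotone in $v$ for fixed $u$, and the constraint $d|\beta| \le (48P^2)^{-1}$ built into the set $D$ of \eqref{313} ensures $|3\beta(v+u)^2| \ll 1/d$, so $\partial_v \Psi$ is dominated by the $k/(qd)$ term once $|k|$ exceeds a small multiple of $qd$. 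Van der Corput's first-derivative test then gives decay of $\widehat{I}_d(k/q)$ in $|k|$; summing the resulting errors over $k$, over $r$ (using the trivial bound $|C(r)| \le q$ from Lemma \ref{L24}), and over $d \in D$ returns the acceptable total error $O(P^{1+\varepsilon})$.
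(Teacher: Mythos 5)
Your derivation of the main term is sound and runs parallel to the paper's: the identity $\sum_{r=1}^q C(r)=|U(q,ad^3)|^2$ is the case $b=0$ of the evaluation $T(q,a,b)=|U(q,a,b)|^2$ in \eqref{318}, and the zero frequency does yield $K(\beta)/(qd)$. The gaps are in the nonzero frequencies, and they are fatal as sketched. First, the bound $|C(r)|\le q$ is quantitatively insufficient: summed over $r$ it gives $q^2$ per frequency $k$, and after the prefactor $(qd)^{-1}$ and the first-derivative bound for the oscillatory integral, the total error is of order $Pq(\log P)^2$ per admissible configuration. Since $q$ may be as large as $P$ on $\fk M$, this is $\asymp P^2$, not $O(P^{1+\varepsilon})$. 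One must retain the complete sum $\sum_r C(r)e(kr/q)=|U(q,ad^3,k)|^2$ — the very manipulation you performed at $k=0$ — and then invoke square-root cancellation $|U(q,ad^3,k)|^2\ll q^{1+\varepsilon}(q,k)$. That is exactly the content of Lemma \ref{Huasum}, whose point is that $d^3$ may share prime factors with $q$, so Hua's and Hooley's classical bounds do not apply directly; its proof is a nontrivial part of the paper that your argument silently needs.

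Second, the first-derivative test only bites for large $|k|$. For $d\in D$ one has $|3\beta(v+u)^2|\le 27P^2|\beta|\le 27/(48d)$, so comparison with $|k|/(qd)$ shows the derivative is bounded away from zero only when $|k|\gg q$ (not $qd$ as you wrote, but either way the problem is the range below the threshold). For $1\le |k|\lesssim q$ the phase can be stationary in $v$ for some $u$, and you give no argument for these up to $\asymp q$ frequencies. The paper's proof devotes its second half to precisely this set (denoted $C_d^-$): one shows it is empty unless $qd|\beta|>1/(24P^2)$, that otherwise it lies in the short interval $[1,16qd\beta P^2]$ with $d\beta\le (48P^2)^{-1}$, and one then bounds its contribution by combining the decay estimate \eqref{310} for the $u$-integral with Lemma \ref{Huasum}. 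Without a counting argument of this kind for the small frequencies, and without the Gauss-sum cancellation above, the claimed error term $O(P^{1+\varepsilon})$ is not established.
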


The proof is somewhat involved. We require some auxiliary estimates that we present first.

\begin{lemma}\label{Huasum} Let $a,b,q,d$ be natural numbers with $(a,q)=1$ and $d$ square-free. Then
$$ U(q,ad^3,b) \ll q^{1/2+\varepsilon} (q,b)^{1/2}. $$
\end{lemma}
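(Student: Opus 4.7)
The plan is to exploit the multiplicativity of complete exponential sums modulo $q$ and then split on the behaviour of each prime factor of $q$ with respect to $d$. By the Chinese Remainder Theorem, $U(q,ad^3,b)$ factors (with unit twists in the coefficients) as a product over prime powers $p^k \parallel q$; both the coprimality of $a$ with $q$ and the identity $(q,b)=\prod_{p^k \parallel q}(p^k,b)$ respect this factorisation, so it is enough to establish
\[
U(p^k, ad^3, b) \ll p^{k/2+\varepsilon}(p^k,b)^{1/2}
\]
for any prime power $p^k$ with $(a,p^k)=1$ and any squarefree $d$.

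When $p\nmid d$, $d$ is a unit modulo $p^k$; writing $\bar d$ for its inverse and making the substitution $z\mapsto\bar d z$ I would obtain $U(p^k,ad^3,b)=U(p^k,a,b\bar d)$, with $(p^k,b\bar d)=(p^k,b)$. The bound then reduces to the classical Hua--Weyl estimate for $\sum_z e((az^3+bz)/p^k)$ with $(a,p^k)=1$, which follows from the Weil bound when $k=1$ and from a Hensel-type stationary phase analysis when $k\ge 2$ (cf.\ \cite[Lemma~4.4]{hlm}).

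When $p\mid d$, squarefreeness of $d$ forces $p\parallel d$, hence $d^3=p^3 d_0$ with $(d_0,p)=1$. For $k\le 3$ we have $ad^3\equiv 0\pmod{p^k}$, and the sum collapses to $\sum_{z=1}^{p^k} e(bz/p^k)$, which is bounded by $(p^k,b)\le p^{k/2}(p^k,b)^{1/2}$. For $k\ge 4$ I would parametrise $z=z_0+p^{k-3}z_1$ with $0\le z_0<p^{k-3}$ and $0\le z_1<p^3$; the binomial expansion shows that $ap^3 d_0 z^3/p^k \equiv ad_0 z_0^3/p^{k-3}\pmod 1$, and $bz/p^k \equiv bz_0/p^k+bz_1/p^3\pmod 1$, so the sum factorises as
\[
U(p^k,ap^3 d_0,b) = \Bigl(\sum_{z_1=0}^{p^3-1} e(bz_1/p^3)\Bigr)\sum_{z_0=0}^{p^{k-3}-1} e\Bigl(\frac{ad_0 z_0^3}{p^{k-3}}+\frac{bz_0}{p^k}\Bigr).
\]
The outer sum vanishes unless $p^3\mid b$, in which case it equals $p^3$ and the inner sum is $U(p^{k-3},ad_0,b/p^3)$ with $(ad_0,p)=1$. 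Applying the previous case and using the identity $(p^k,b)=p^3(p^{k-3},b/p^3)$, valid when $p^3\mid b$, converts the product to precisely $p^3\cdot p^{(k-3)/2+\varepsilon}(p^{k-3},b/p^3)^{1/2} = p^{k/2+\varepsilon}(p^k,b)^{1/2}$.

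The main obstacle, conceptually, is verifying that the $z_0$--$z_1$ decomposition in the case $p\mid d$, $k\ge 4$ really separates the sum: this works precisely because the exponent of $p$ in $d^3$ is exactly $3$, which is where the squarefreeness hypothesis plays its decisive role (any higher power of $p$ in $d$ would yield a surviving genuine cubic in $z_1$ modulo $p^k$, breaking the clean factorisation). After that, the argument is a routine reduction to the classical bound for Hua's sum at coprime cubic coefficient, which is available from \cite{hlm}.
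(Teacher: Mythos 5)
Your argument is correct and is essentially the paper's own proof: the paper likewise reduces by (quasi-)multiplicativity to prime powers, invokes the classical bound (Hooley's estimate for $U$ with coprime cubic coefficient) when $p\nmid d$, and at primes $p\parallel d$ performs exactly your case split $k\le 3$ versus $k\ge 4$ with the substitution $z=z_0+p^{k-3}z_1$ to extract the factor $p^3$ and the condition $p^3\mid b$. The only cosmetic differences are that the paper peels off primes of $q_1=q/q_0$ one at a time by induction rather than applying CRT to all prime powers at once, and that for the coprime case it cites Hooley's eqn.\ (43) (which gives the even stronger exponent $(q,b)^{1/4}$) rather than a Weil-plus-Hensel analysis.
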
 

\begin{proof} We begin with the identity
\begin{equation}\label{H0} U(q_1q_2,c,b) = U(q_1,cq_2^2,b) U(q_2,cq_1^2,b) \end{equation}
that holds for coprime natural numbers $q_1,q_2$ and any integers $b,c$. This is  proved {\em en passant} on p.\ 38 of Vaughan \cite{hlm}, subject to the condition that $(c,q_1 q_2)=1$. However, an inspection of the underlying argument shows that this extra condition is not required.

Now suppose that $c$ is an integer with $(c,q)=1$. Then, it follows from Hooley \cite[eqn.\ (43)]{Hacta} that 
\begin{equation}\label{H1} U(q,c,b) \ll C^{\omega(q)} q^{1/2} (q,b)^{1/4}. \end{equation}
Here $C>1$ is a suitable absolute constant. In particular, this proves the lemma in the special case where $d$ and $q$ are coprime.

Let $q_0$ denote the largest divisor of $q$ with $(q_0,d)=1$, and write $q=q_0 q_1$. Then $(q_0,q_1)=1$, and \eqref{H0} gives 
$$ U(q,ad^3,b) = U(q_0, ad^3q_1^2,b) U(q_1,ad^3q_0^2,b). $$
We shall now prove, by induction on the number of distinct prime factors of $q_1$, that whenever $(c,q_1)=1$, one has
\begin{equation}\label{H4} |U(q_1,cd^3,b)|\le C^{\omega(q_1)}q_1^{1/2}(q_1,b)^{1/2}. \end{equation}
Since we may apply \eqref{H1} to $U(q_0,ad^3q_1^2,b)$, we see that the claim of the lemma follows from \eqref{H4}.

If $q_1$ has no prime factor, then $q_1=1$ and \eqref{H4} holds. Thus we may suppose that $q_1$ has at least one prime factor $p$, say. Then $d=pd'$ with $p\nmid d'$. There is a number $\nu\ge 1$ with $q_1=p^\nu q_2$ and $p\nmid q_2$. We suppose that \eqref{H4} is true for $q_2$ in place of $q_1$ and apply \eqref{H0} with $p^\nu$ in the role of $q_1$. We then encounter a factor
$ U(p^\nu, cp^3,b)$ for some suitable $c$ not divisible by $p$ while the induction hypothesis applies to the complementary factor. 
If $1\le \nu\le 3$, then
$$ U(p^\nu, cp^3, b) = \sum_{x=1}^{p^\nu} e\Big(\frac{bx}{p^\nu}\Big). $$
If $p^\nu\mid b$, then 
$$ U(p^\nu, cp^3, b) = p^\nu = p^{\nu/2} (p^\nu,b)^{1/2}. $$
If $p^\nu\nmid b$, then $U(p^\nu, cp^3,b) =0$. Hence, when $\nu\le 3$, we certainly have
$$ |U(p^\nu, cp^3, b)|\le p^{\nu/2}(p^\nu,b)^{1/2}. $$
If $\nu>3$, then the substitution $x= y+p^{\nu-3}z$ in
$$ U(p^\nu,cp^3,b) = \sum_{x=1}^{p^\nu} e\Big(\frac{cx^3}{p^{\nu-3}} +\frac{bx}{p^\nu}\Big) $$
gives
$$ U(p^\nu,cp^3,b) =
\sum_{y=1}^{p^{\nu-3}} \sum_{z=1}^{p^3} e\Big(\frac{cy^3}{p^{\nu-3}} + \frac{by}{p^\nu}\Big) e\Big(\frac{bz}{p^3}\Big). $$
Here the sum over $z$ is zero unless $p^3\mid b$, and in the latter case we conclude that
$$  U(p^\nu,cp^3,b) = p^3 U(p^{\nu-3}, c,b/p^3). $$
We may now apply the bound \eqref{H1} to conclude that
$$  |U(p^\nu,cp^3,b)| \le C p^3 p^{\nu/2 -3/2} (p^{\nu-3}, bp^{-3})^{1/2} \le C p^{\nu/2}(p^\nu,b)^{1/2}. $$
This completes the induction and the proof of Lemma \ref{Huasum}.  
\end{proof}

We also require an estimate for certain oscillatory integrals. The following simple bound is taken from Titchmarsh \cite[Lemma 4.2]{Tit}.

\begin{lemma} \label{Tit} Let $A,B$ be real numbers with $A<B$. Let $\Theta: [A,B] \to \RR$ be  differentiable, and suppose that $\Theta'$ is monotonic on (A,B). Suppose that for all $t$ with $A<t<B$ one has $|\Theta'(t)| \ge \Delta>0$. Then
$$ \Big|\int_A^B e(\Theta(t))\,\mathrm d t\Big| \le \frac1\Delta. $$
\end{lemma}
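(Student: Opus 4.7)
The plan is a standard integration-by-parts argument. Since $|\Theta'(t)|\ge\Delta>0$ on $(A,B)$, $\Theta'$ is nowhere zero, and Darboux's theorem (derivatives have the intermediate value property) prevents $\Theta'$ from changing sign. Thus $\Theta'$ is always $\ge \Delta$ or always $\le -\Delta$, and combined with the monotonicity hypothesis, $1/\Theta'$ is itself monotonic and takes values throughout a single interval of length at most $1/\Delta$; in particular it is of bounded variation on $[A,B]$.

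The next step is to rewrite
\[
\int_A^B e(\Theta(t))\,\mathrm dt = \int_A^B \frac{1}{2\pi i\,\Theta'(t)}\,\mathrm d\bigl(e(\Theta(t))\bigr)
\]
and integrate by parts in the Riemann--Stieltjes sense, which is legitimate because $1/\Theta'$ is of bounded variation. This yields
\[
\int_A^B e(\Theta(t))\,\mathrm dt = \left[\frac{e(\Theta(t))}{2\pi i\,\Theta'(t)}\right]_{t=A}^{t=B} - \int_A^B e(\Theta(t))\,\mathrm d\!\left(\frac{1}{2\pi i\,\Theta'(t)}\right).
\]
The boundary term is bounded by $\frac{1}{2\pi|\Theta'(A)|}+\frac{1}{2\pi|\Theta'(B)|}\le \frac{1}{\pi\Delta}$. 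The remaining Stieltjes integral, using $|e(\Theta(t))|=1$ and the monotonicity of $1/\Theta'$, is bounded by its total variation, namely $\frac{1}{2\pi}\bigl|1/\Theta'(B)-1/\Theta'(A)\bigr|\le\frac{1}{2\pi\Delta}$. Summing, the whole integral is at most $\frac{3}{2\pi\Delta}<\frac{1}{\Delta}$.

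The main subtlety to be careful about is justifying that $\Theta'$ keeps a constant sign under the weak hypothesis (only differentiability of $\Theta$, not continuous differentiability) stated in the lemma. Darboux's theorem resolves this cleanly; without it, one might worry about $\Theta'$ jumping across zero with one-sided limits of absolute value $\ge\Delta$, but such a jump is incompatible with $\Theta'$ being a derivative. A secondary point is to verify that the Riemann--Stieltjes integration by parts applies in this generality, which is guaranteed as soon as one factor is of bounded variation and the other is continuous.
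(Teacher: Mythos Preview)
Your argument is correct and is essentially the standard van der Corput/Titchmarsh proof. The paper does not actually supply a proof of this lemma; it merely quotes it from Titchmarsh \cite[Lemma 4.2]{Tit}, so there is nothing substantive to compare against.

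One small point worth tightening: the hypothesis only asserts $|\Theta'(t)|\ge\Delta$ for $A<t<B$, yet your boundary-term estimate uses $|\Theta'(A)|,|\Theta'(B)|\ge\Delta$. This is true, but it deserves a word of justification. Since $\Theta'$ is monotonic on $(A,B)$ and has the Darboux property on $[A,B]$, it is in fact continuous on $[A,B]$ (a monotone Darboux function has no jumps), so the one-sided limits at the endpoints coincide with $\Theta'(A)$ and $\Theta'(B)$ and inherit the bound $\ge\Delta$ in absolute value. Alternatively, one can simply run your argument on $[A+\epsilon,B-\epsilon]$, obtain the uniform bound $3/(2\pi\Delta)$, and let $\epsilon\to 0$.
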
 

We now embark on the proof of Lemma \ref{L37}. The point of departure is \eqref{314-}
where we open $S$ and $J$ via \eqref{22} and \eqref{37}. In the notation fixed in \eqref{maj}, this produces
\begin{equation}\label{314} F^*_w(\alpha) =  
\sum_{d\in D} \frac{\mu(d)}{qd} \sum_{z=1}^q \int_P^{2P} \sum_{h\le H/d} e(\Phi)\,\dd u \end{equation}
in which $\Phi$ abbreviates the expression
$$ \alpha d^3h^3 + 3 h^2\Big(\frac{azd^3}{q} +ud^2\beta\Big) + 3h \Big(\frac{az^2d^3}{q} +u^2d\beta\Big). $$
Since $e(\Phi)$ depends only on $\Phi$ modulo $1$, we may replace $h$ 
by the number $k$ with $1\le k\le q$ and $h\equiv k \bmod q$ in all the fractions
$ad^3h^3/q$, $3ad^3z h^2/q$, $3ad^3z^2h/q$. This shows that
$$ \sum_{h\le H/d}\! e(\Phi) = \sum_{k=1}^q e\Big(\frac{ad^3(k^3+3zk^2+3z^2k)}{q}\Big) \multsum{h\le H/d}{h\equiv k\bmod q}\! e\big(\beta(d^3h^3+3ud^2h^2+3u^2dh)\big). $$ 
In the inner sum, we pick up the congruence condition by additive characters, and then sum over $1\le z\le q$, as in \eqref{314}. This brings in the complete exponential sum
\begin{equation}\label{315} T(q,a,b) = \sum_{k=1}^q \sum_{z=1}^q  e\Big(\frac{a(k^3+3zk^2+3z^2k)+bk}{q}\Big), \end{equation}
and via \eqref{314} we arrive at 
\begin{equation}\label{316} F_w^*(\alpha) = \sum_{d\in D}\frac{\mu(d)}{q^2d} \sum_{-q/2<b\le q/2} T(q,ad^3, b) \int_P^{2P} \sum_{h\le H/d} e(\beta\Psi(u,dh) - bh/q)\,\dd u \end{equation}
where
$$\Psi(u,t) = t^3 + 3ut^2 + 3u^2t. $$

We now concentrate on the inner sum in \eqref{316}, and apply van der Corput's finite version of Poisson's summation formula. In this step, the data $\beta$, $u$, $d$, $b$ and $q$ are fixed but restricted to the ranges implicit in \eqref{316}. The polynomial
$$ \psi(h) = \beta\Psi(u,dh) - bh/q = \beta(d^3h^3 + 3ud^2h^2 + 3u^2dh) -bh/q $$
has derivative
$$ \psi'(h) =\beta(3d^3h^2+6ud^2h+3u^2d) -b/q $$
so that
\begin{align*} |\psi'(h)| &\le \frac{1}{48P^2} (3h^2d^2 +6udh+3u^2) +\frac12 \\ & \le \frac{1}{48P^2}
(3H^2+12PH + 12 P^2) +\frac12 \le \frac78, \end{align*}
at least for large $P$. Further, $\psi''(h) = 6\beta(hd^3+ud^2)$ so that $\psi''(h)$ has the same sign as $\beta$. In particular $\psi'$ is monotonic in the range of summation for $h$. By \cite[Lemma 4.2]{hlm}, we conclude that
\begin{equation}\label{317}
\sum_{h\le H/d} e(\psi(h)) = \sum_{l=-1}^1 \int_0^{H/d} e(\psi(\eta)- l\eta)\,\dd \eta+ O(1), \end{equation} 
uniformly in $\beta$, $u$, $d$, $b$ and $q$ within the ranges implicit in \eqref{316}.

We now substitute \eqref{317} in \eqref{316}. The bounded error in \eqref{317} introduces an error to \eqref{316} that does not exceed
\begin{equation}\label{317a} \ll P \sum_{d\mid w,\,d\le H} \frac{1}{dq^2} \sum_{b=1}^q |T(q,ad^3,b)|. \end{equation}
If we write $x=z+k$ in \eqref{315} and substitute $x$ for $k$, then we recall
\eqref{322} to conclude that
\begin{equation}\label{318} T(q,a,b) = \sum_{x=1}^q \sum_{z=1}^q e\Big(\frac{a}{q}(x^3-z^3) + \frac{b}{q}(x-z)\Big) = |U(q,a,b)|^2.\end{equation}
By orthogonality,
$$ \sum_{b=1}^q |T(q,ad^3,b)| = \sum_{b=1}^q |U(q,ad^3,b)|^2 = q^2. $$
Consequently, the expression in \eqref{317a} is bounded by $O(P\log P)$, and from \eqref{316} and \eqref{318} we infer that, up to an error of size $O(P\log P)$, the sum $F^*_w(\alpha)$ equals
$$
\sum_{d\in D} \frac{\mu(d)}{q^2d} \!\sum_{-\frac12 q<b\le\frac12 q} \!|U(q,ad^3, b)|^2 \sum_{l=-1}^1 \int_P^{2P}\!\!\!\int_0^{H/d}
\! e\Big(\beta\Psi(u,d\eta)- \frac{b\eta}{q} - l\eta\Big)\,\dd \eta\,\dd u. $$
Now recall the conclusion of Lemma \ref{L36} and substitute $c=b+lq$. Then $F_w(\alpha)$, up to an error of size $O(P^{1+\varepsilon})$, equals  
\begin{equation}\label{320} \sum_{d\in D} \frac{\mu(d)}{q^2d}
\!\sum_{-\frac32q<c\le \frac32 q}\!|U(q,ad^3, c)|^2 \int_P^{2P}\!\!\!\int_0^{H/d}
e\Big(\beta\Psi(u,d\eta)- \frac{c\eta}{q}\Big)\,\dd \eta\,\dd u .\end{equation} 

Here, we first consider the term $c=0$ where the integral on the right hand side becomes
$$ \int_P^{2P}\!\!\!\int_0^{H/d}
e(\beta\Psi(u,d\eta))\,\dd \eta\,\dd u = \f1{d} \int_0^H\!\!\!\int_P^{2P}
e(\beta\Psi(u,\eta))\,\dd u\,\dd \eta = \f{K(\beta)}{d}. $$
Thus, the term $c=0$ is exactly the explicit term on the right hand side displayed in the conclusion of Lemma \ref{L37}, and our remaining task is 
to show that terms with $c\neq 0$ contribute an amount no larger than $O(P^{1+\varepsilon})$ to \eqref{320}. 

We wish to apply Lemma \ref{Tit} to the inner integral in  \eqref{320}. The expression
$$ \psi(\eta) = \beta \Psi(u,d\eta) - c\eta/q = \beta(d^3\eta^3 + 3ud^2\eta^2 +3u^2d\eta) - c\eta/q$$
defines a twice differentiable function on $[0,H/d]$, and one has
$$ \psi'(\eta) = d\beta(3d^2\eta^2 + 6ud\eta + 3u^2) - c/q, \quad \psi''(\eta) = d^2\beta (6d\eta +6u). $$
In particular, $\psi$ is monotone. 

Assume now that $d\in D$ is fixed. If the integer $c\neq 0$ has the property that for all $u\in[P,2P]$ and all $\eta\in[0,H/d]$ one has
\begin{equation}\label{Abl}\Big| d\beta(3d^2\eta^2 + 6ud\eta + 3u^2) - \f{c}{q}\Big| \ge \f{|c|}{5q}, \end{equation}
then Lemma \ref{Tit} shows that
$$\Big| \int_0^{H/d} e\Big(\beta \Psi(u, d\eta) - \frac{c\eta}{q}\Big)\, \mathrm d \eta\Big| \le \f{5q}{|c|}, $$ 
uniformly for all $u\in [P,2P]$. Let $C_d^+$ be the set of all integers $c\neq 0$ with $-\f32 q< c\le \f32 q$ where \eqref{Abl} holds for all $u\in [P,2P]$ and all $\eta\in [0,H/d]$. Then, by Lemma \ref{Huasum}, the terms with $c\in C_d^+$ contribute to \eqref{320} an amount not exceeding
\begin{equation}\label{324} \sum_{d\in D} \f{P}{qd} \sum_{c\in C_d^+}\f{|U(q,ad^3,c)|^2}{|c|}
\ll Pq^\varepsilon \sum_{d\in D} \frac1{d} \sum_{1\le |c|\le 2q} \f{(q,c)}{|c|} \ll P^{1+2\varepsilon}. \end{equation} 

Let $C_d^-$ be the set of all non-zero integers $c$ with $-\f32 q<c\le \f32 q$ that are not in $C_d^+$. 
Recalling \eqref{Abl}, we see that for each $c\in C_d^-$, there are real numbers $\eta, u$ with $0\le \eta\le H/d$, $P\le u\le 2P$ and
\begin{equation}\label{Alt} |qd\beta (3d^2\eta^2 + 6ud\eta + 3u^2) - c| < \ts\f15 |c|. \end{equation}

As a first case, we now consider the situation where $qd|\beta| \le 1/(24P^2)$. Then
$$ qd|\beta| (3d^2\eta^2 + 6ud\eta + 3u^2)\le (24P^2)^{-1}(3H^2+12PH+12P^2) \le \ts\f34, $$
at least for large $P$. But then, since $|c|\ge 1$, it follows that the left hand side of \eqref{Alt} is at least as large as $|c|-\f34 > \f15|c|$. This shows that the set $C_d^-$ is empty. Hence, in this case, the proof of Lemma \ref{L37} is already complete because \eqref{324} covers all terms $c\neq 0$ in \eqref{320}.

We may now suppose that $qd|\beta| > 1/(24P^2)$. In addition, we temporarily suppose that $\beta>0$. Then, should \eqref{Alt} hold for some non-zero integer $c$, it follows that $c\ge 1$, and that the number $\lambda$ defined through the equation
$$  qd\beta (3d^2\eta^2 + 6ud\eta + 3u^2) =\lambda c$$
is in the interval $[\f45,\f65]$. We conclude that for large $P$ we have
$$ 1\le c \le \ts \f54 qd\beta \big( 12P^2 +O(PH)\big) \le 16 qd\beta P^2,$$This shows that $C_d^-$ is contained in the interval $[1,16qd\beta P^2]$. Recall that $d\in D$, whence $d\beta \le 1/(48P^2)$, so that we have gained new information here. The total contribution of the sets $C_d^-$ to \eqref{320} now is
\begin{align*} 
 \sum_{d\in D} \frac{\mu(d)}{q^2d}&
\sum_{c\in C_d^-}|U(q,ad^3, c)|^2\int_0^{H/d} e\Big( - \frac{c\eta}{q}\Big)   \int_P^{2P}
e\big(\beta\Psi(u,d\eta)\big)\,\dd u\,\dd \eta \\
\ll & \sum_{d\in D} \frac{1}{q^2d}
\sum_{c\in C_d^-}|U(q,ad^3, c)|^2 \int_0^{H/d} |J(3d^2\eta^2\beta, 3d\eta\beta)|\,\dd\eta.
\end{align*}
Here we apply \eqref{310} and Lemma \ref{Huasum}, and then we substitute $d\eta = v$ to see that the above is bounded by
\begin{align}
\ll & \sum_{d\in D} \frac{1}{q^2d^2} \sum_{1\le c\le 16qd\beta P^2} q^{1+\varepsilon}(q,c) \int_0^H P(1+P^2\beta v)^{-1}\,\dd v \notag\\
\ll & \sum_{d\in D} \frac{q^{2\varepsilon}}{qd^2} (qd\beta P^2) \frac{\log P}{\beta P} \ll P^{1+3\varepsilon}. \label{325}
\end{align}
In this analysis, we supposed that $\beta >0$. If $\beta < 0$, then we may change $\beta$ and $c$ in \eqref{Alt} to $-\beta$ and $-c$ to realize that the same argument applies. By \eqref{324} and \eqref{325} we now see that terms with $c\neq 0$ in \eqref{320} contribute at most $O(P^{1+\varepsilon})$, as required to complete the proof of Lemma \ref{L37}.

\section{Major arcs: the local factors}

In this section we examine the explicit term that remained in the conclusion of Lemma \ref{L37}. It is straightforward to estimate the integral $K(\beta)$ satisfactorily.

\begin{lemma}\label{L38} One has $K(\beta) \ll HP(1+HP^2|\beta|)^{-1} \log P$.
\end{lemma}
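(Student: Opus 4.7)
The plan is to exploit the algebraic identity
\[
v^3 + 3uv^2 + 3u^2v = (u+v)^3 - u^3,
\]
so that
\[
K(\beta) = \int_0^H \int_P^{2P} e\bigl(\beta\bigl((u+v)^3 - u^3\bigr)\bigr)\,\mathrm du\,\mathrm dv.
\]
This makes the partial derivative in $u$ transparent: it equals $3\beta v(2u+v)$, which for $u\in[P,2P]$ and $v\ge 0$ is monotonic in $u$ (of constant sign $\mathrm{sgn}(\beta)$) and in absolute value at least $6|\beta|vP$. Thus, for $\beta\ne 0$ and $v>0$, Lemma \ref{Tit} applied to the inner integral in $u$ yields
\[
\Bigl|\int_P^{2P} e\bigl(\beta((u+v)^3-u^3)\bigr)\,\mathrm du\Bigr| \le \frac{1}{6|\beta|vP}.
\]
Combined with the trivial bound $P$ for the same inner integral, this gives
\[
|K(\beta)| \ll \int_0^H \min\!\Bigl(P,\,\frac{1}{|\beta|vP}\Bigr)\,\mathrm dv.
\]

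The remainder is a routine dyadic split. If $HP^2|\beta|\le 1$, the trivial bound $|K(\beta)|\le HP$ already suffices since $(1+HP^2|\beta|)^{-1}\ge \tfrac12$. Otherwise set $v_0 = 1/(|\beta|P^2)$, which lies in $(0,H)$; on $[0,v_0]$ use the bound $P$, contributing $Pv_0 = 1/(|\beta|P)$, and on $[v_0,H]$ use $1/(|\beta|vP)$, contributing
\[
\frac{1}{|\beta|P}\log(H/v_0) = \frac{\log(H|\beta|P^2)}{|\beta|P} \ll \frac{\log P}{|\beta|P},
\]
the last step because $|\beta|\le 1$ and $H,P$ are polynomial in $P$. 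Writing $1/(|\beta|P) = HP/(HP^2|\beta|)$, both ranges give $|K(\beta)|\ll HP(HP^2|\beta|)^{-1}\log P$, and the two cases combine into the asserted bound $|K(\beta)|\ll HP(1+HP^2|\beta|)^{-1}\log P$. The case $\beta=0$ is the trivial bound, and $\beta<0$ reduces to $\beta>0$ via $K(-\beta)=\overline{K(\beta)}$.

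There is no real obstacle here: the only minor point of care is that the phase derivative $3\beta v(2u+v)$ degenerates as $v\to 0$, so Lemma \ref{Tit} cannot be applied uniformly down to $v=0$. This is harmless because we are using the trivial bound on the small-$v$ range anyway, and the optimal cutoff $v_0$ sits at precisely the point where the two bounds cross.
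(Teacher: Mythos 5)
Your proof is correct and is essentially the paper's argument: the paper also bounds $|K(\beta)|\le\int_0^H|J(3\beta v^2,3\beta v)|\,\mathrm dv$ and applies the first-derivative bound $J(\beta_1,\beta_2)\ll(P\beta_2)^{-1}$ (Lemma \ref{LJ}, packaged with the trivial bound as \eqref{310}) before integrating in $v$, which is exactly your Lemma \ref{Tit} estimate plus the cutoff at $v_0=1/(|\beta|P^2)$. The rewriting of the phase as $(u+v)^3-u^3$ is a cosmetic difference.
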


\begin{proof} By \eqref{321} and \eqref{37}, and then by \eqref{310}, we find that
$$ |K(\beta)| \le \int_0^H|J(3\beta v^2, 3\beta v)|\,\dd v \ll \int_0^H P(1+P^2|\beta|v)^{-1}  \,\dd v, $$
and the lemma follows.
 \end{proof}
 
The bulk of this section is devoted to the other factor that occurs in Lemma \ref{L37}, and that we now denote by 
\begin{equation}\label{326}
\mathscr S_0(\alpha,w) = 
 \sum_{d\in D(\alpha,w)} \frac{\mu(d)}{q^2d^2}
|U(q,ad^3)|^2. \end{equation}
The first step is to simplify the summation condition to $d\mid w$. Thus, we compare $\mathscr S_0(\alpha,w)$ with
\begin{equation}\label{327}
\mathscr S(\alpha,w) = 
 \sum_{d\mid w} \frac{\mu(d)}{q^2d^2}
|U(q,ad^3)|^2. \end{equation}
\begin{lemma}\label{L39}
Let $\alpha\in\mathfrak M$. Then
$$ \mathscr S_0(\alpha,w)K(\beta)= 
\mathscr S(\alpha,w)K(\beta)
+O(P(\log P)^2). $$
\end{lemma}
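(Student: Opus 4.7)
The plan is to rewrite
$$ \bigl(\mathscr{S}(\alpha,w)-\mathscr{S}_0(\alpha,w)\bigr)K(\beta) = K(\beta)\sum_{\substack{d\mid w\\ d\notin D(\alpha,w)}} \frac{\mu(d)}{q^2d^2}|U(q,ad^3)|^2$$
and estimate this absolutely, using the entirely trivial bound $|U(q,ad^3)|^2 \le q^2$ so that each summand has modulus at most $d^{-2}$. The divisors $d\mid w$ with $d\notin D$ split into two regimes: (a) $d>H$; (b) $d\le H$ but $d|\beta|>(48P^2)^{-1}$. I will treat these separately.

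For regime (a) the tail estimate $\sum_{d>H}d^{-2}\ll H^{-1}$, multiplied against the crude bound $K(\beta)\ll HP\log P$ from Lemma \ref{L38}, contributes $O(P\log P)$. For regime (b) I set $D_0=(48P^2|\beta|)^{-1}$ (assuming $\beta\ne0$, since otherwise regime (b) is empty), observe that the regime is nonempty only when $HP^2|\beta|>1/48$, and use $\sum_{d>D_0}d^{-2}\ll D_0^{-1}\ll P^2|\beta|$ together with the sharper form $K(\beta)\ll HP(\log P)/(1+HP^2|\beta|)$ of Lemma \ref{L38}. This produces a contribution of order
$$\frac{HP^3|\beta|\log P}{1+HP^2|\beta|}.$$
When $HP^2|\beta|\ge 1$ this is $\ll P\log P$ directly; when $1/48<HP^2|\beta|<1$ the denominator is $\asymp 1$ but then $|\beta|<(HP^2)^{-1}$, again giving $\ll P\log P$.

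Adding the contributions from the two regimes yields an error of size $O(P\log P)$, comfortably within the claimed $O(P(\log P)^2)$. The only subtle point — and what I expect to be the mildest obstacle — is the transition regime $HP^2|\beta|\asymp 1$, where neither the decay of $K(\beta)$ nor the size restriction $d|\beta|\le(48P^2)^{-1}$ is winning individually; the cutoff defining $D$ has however been calibrated precisely so that the product bound still suffices. No use of Lemma \ref{Huasum} or of any nontrivial cancellation in $U(q,ad^3)$ will be needed, which is why the argument is robust against the presence of cubic prime-power factors of $q$.
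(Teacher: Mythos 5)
Your proposal is correct and follows essentially the same route as the paper: the same trivial bound $|U(q,ad^3)|^2\le q^2$, the same split of the divisors $d\mid w$, $d\notin D$ into the regimes $d>H$ and $d\le H$, $d|\beta|>(48P^2)^{-1}$, and the same interplay with the two bounds for $K(\beta)$ from Lemma \ref{L38}. The only (harmless) difference is that you sum $d^{-2}$ over $d>D_0$ directly rather than writing $d^{-2}\le 48P^2|\beta|\,d^{-1}$ and summing over $d\le H$ as the paper does, which saves you one factor of $\log P$ relative to the stated error term.
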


\begin{proof} We apply the trivial bound $|U(q,ad^3)|\le q$ and then deduce from \eqref{313}, \eqref{326} and \eqref{327} that
\begin{equation}\label{330}
 | \mathscr S(\alpha,w) - \mathscr S_0(\alpha,w) | \le \sum_d \frac1{d^2}, 
\end{equation}
where the sum on the right hand side extends over all $d\mid w$ where at least one of the inequalities
$$ d>H, \qquad d|\beta|\ge 1/(48P^2) $$
is true. The contribution from all $d$ where $d>H$ is $O(H^{-1})$. We may therefore concentrate on terms where $d\le H$ and $d|\beta|\ge 1/(48 P^2)$.
The contribution of these $d$ to the sum in \eqref{330} does not exceed
\begin{equation}\label{330a} 48P^2 |\beta| \sum_{d\le H} \frac1{d} \ll P^2|\beta| \log P. \end{equation}
We now multiply \eqref{330} with $|K(\beta)|$. For the part of the sum over $d$ in \eqref{330} where $d>H$ it suffices to invoke the trivial bound $|K(\beta)|\le HP$ to see that the corresponding contribution to $|(\mathscr S-\mathscr S_0)K|$ is $O(P)$. The other contribution only arises when $|\beta| \ge 1/(48dP^2)
 \ge 1/(48HP^2)$, and in this case Lemma \ref{L38} gives $K(\beta)\ll (P|\beta|)^{-1}\log P$. When multiplied with the right hand side of \eqref{330a}, we get $P(\log P)^2$. This completes the proof.
\end{proof} 

Recall that when $\alpha\in\mathfrak M$ is given, then $a$ and $q$ are determined by \eqref{38}. By \eqref{327}, we see that $\mathscr S(\alpha,w)$ depends only on $a$ and $q$, and one has
$$ \mathscr S(\alpha,w)=\mathscr S(a/q,w). $$

\begin{lemma}\label{L310} Let $a$ and $q$ be coprime natural numbers. Then $\mathscr S(a/q,w)
\ll \kappa_w(q)^2$.
\end{lemma}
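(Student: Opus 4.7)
The plan is to exploit the multiplicativity of Hua's sum provided by \eqref{H0} in tandem with the square-freeness of $w$. Writing $q = \prod_p p^{\nu_p}$ and $Q_p = q/p^{\nu_p}$, iterating \eqref{H0} gives
\[|U(q, ad^3)|^2 = \prod_{p \mid q} |U(p^{\nu_p}, ad^3 Q_p^2)|^2.\]
Since $w$ is square-free, I parametrise $d \mid w$ by subsets $T$ of $\{p : p \mid w\}$ via $d = \prod_{p \in T} p$, and expand $\mathscr{S}(a/q, w)$ as a sum over such $T$. The goal is to show each prime $p$ independently contributes a local factor bounded up to an absolute constant by $\kappa_w(p^{\nu_p})^2$, whence $\kappa_w(q)^2 = \prod_p \kappa_w(p^{\nu_p})^2$ concludes.

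The principal technical hurdle is that the unit part of $ad^3 Q_p^2 \pmod{p^{\nu_p}}$ depends on the primes in $T \setminus \{p\}$, blocking an immediate product decomposition. I would resolve this by establishing the recursion $U(p^\nu, c) = p^2 U(p^{\nu-3}, c)$ for $p \neq 3$, $\nu \geq 3$, $(c, p) = 1$ (with the analogue $U(3^\nu, c) = 9 U(3^{\nu-3}, c)$ for $\nu \geq 3$ at $p = 3$), derived via the split $z = y + p^{\nu-1} s$ in \eqref{322}: the inner sum over $s$ reduces to a Ramanujan sum that forces $p \mid y$, peeling off one $p$-adic layer and yielding a smaller Gauss sum whose value is independent of the unit part of $c$. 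Combined with the direct evaluations $|U(p^2, c)|^2 = p^2$ for $p \neq 3$, $(c,p) = 1$ (by splitting $z$ according to $v_p(z)$, and observing that lifted cubic characters are imprimitive modulo $p^2$ so their twisted Gauss sums vanish), and $|U(3^3, c)|^2 = 81$ for $(c, 3) = 1$, this shows $|U(p^\nu, c)|$ is independent of the unit $c$ once $\nu$ is large enough, mirroring the defining recursion $\kappa_w(p^{l+3}) = p^{-1} \kappa_w(p^l)$.

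With the factorisation now genuinely multiplicative, I examine each local contribution. For $p \mid q$, $p \nmid w$, the factor is $|U(p^{\nu_p}, c)|^2/p^{2\nu_p}$, which satisfies the same three-step recursion as $\kappa_w(p^{\nu_p})^2$; the base cases are verified directly against the definition of $\kappa_w$. For $p \mid w$, $p \nmid q$ the factor is $1 - 1/p^2 \leq 1 = \kappa_w(p^0)^2$. For $p \mid (q, w)$ the Möbius-weighted combination gives
\[\frac{|U(p^{\nu_p}, c)|^2}{p^{2\nu_p}} - \frac{|U(p^{\nu_p}, cp^3)|^2}{p^{2\nu_p + 2}},\]
which for $\nu_p = 1$ is $O(\kappa_w(p)^2)$, and for $\nu_p \geq 2$ (respectively $\nu_p \geq 3$ if $p = 3$) reduces to two equal terms that cancel \emph{exactly}, matching $\kappa_w(p^{\nu_p}) = 0$. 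The main obstacle is securing this exact cancellation, which relies on the precise Gauss sum evaluations above rather than on Hua's generic bound \eqref{H1}; the small-prime inefficiencies (such as the variation of $|U(9,c)|$ over cubic classes of $c$) are absorbed into the implied constant. Assembling the local estimates multiplicatively yields $\mathscr{S}(a/q, w) \ll \kappa_w(q)^2$.
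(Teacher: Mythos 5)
Your overall strategy is viable and, at bottom, close to the paper's: both arguments reduce to local factors at prime powers, and your exact cancellations for $p\mid(q,w)$, $\nu_p\ge 2$ are correct and correspond precisely to the paper's finding that the coprimality-restricted double sum $W(p^l,b)$ of \eqref{335} vanishes for $l\ge 2$ ($p\neq 3$) and $l\ge 3$ ($p=3$), via Hua's vanishing of $U^*(p^l,b)$. However, there is one concretely false step in your justification of the ``genuinely multiplicative'' factorisation. You claim that the recursion $U(p^\nu,c)=p^2U(p^{\nu-3},c)$ shows $|U(p^\nu,c)|$ to be independent of the unit class of $c$ once $\nu$ is large. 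It does not: the recursion bottoms out at $U(p^{\nu_0},c)$ with $\nu_0\in\{1,2,3\}$, and for $\nu_0=1$ and $p\equiv 1\bmod 3$ the quantity $|U(p,c)|$ genuinely varies over the cubic residue classes of $c$ (e.g.\ $U(7,c)=1+6\cos(2\pi c/7)$). Hence $|U(p^\nu,c)|$ depends on the unit part of $c$ whenever $\nu\equiv 1\bmod 3$, and your stated route to decoupling the primes fails.

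The decoupling you need is nevertheless true, for a simpler reason that your write-up never invokes: the dependence of the local factor at $p$ on the primes in $T\setminus\{p\}$ enters only through $\prod_{p'\in T\setminus\{p\}}p'^3$, which is the \emph{cube} of a unit modulo $p^{\nu_p}$, and the substitution $z\mapsto v^{-1}z$ in \eqref{322} gives $U(p^\nu,cv^3)=U(p^\nu,c)$ for any unit $v$. Inserting this observation makes your term-by-term pairing over $T$ versus $T\cup\{p\}$ legitimate, and the rest of your local analysis (the recursion for $\nu\ge 4$, the evaluations at $\nu=2,3$, the Weil bound at $\nu=1$, and the special treatment of $p=3$) then goes through; the finitely many primes where the local constant exceeds $1$ are harmless for a $\ll$ bound. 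The paper sidesteps the decoupling issue entirely by first splitting off the part of $q$ coprime to $w$ via \eqref{331} and the same cube-invariance, and then packaging the remaining M\"obius sum as the single sum $W(r,b)$ of \eqref{335}, whose quasi-multiplicativity \eqref{337} is proved directly by the Chinese remainder theorem; you may find that packaging cleaner than repairing the term-by-term product.
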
 

\begin{proof} We begin by collecting a number of well known results on cubic Gauss sums. First we note that, as an instance of \eqref{H0}, when $q_1$, $q_2$ are coprime natural numbers and $c$ is an integer, then
\begin{equation}\label{331} U(q_1q_2,c) = U(q_1,cq_2^2)U(q_2,cq_1^2). \end{equation}
 Further, on combining the conclusions of Lemmata 4.3, 4.4 and 4.5 of \cite{hlm}, one finds that
\begin{equation}\label{332}  q^{-1}U(q,c) \ll \kappa_1(q). \end{equation}
The references only confirm this estimate when $c$ and $q$ are coprime, but an argument analogous to that delivering \eqref{24} shows that the expression $q^{-1}U(q,c)$ is a function of the fraction $c/q$, and therefore \eqref{332} does not require that $c$ and $q$ be coprime.

Given $q$ and $w$, we put {$w_1$ for the largest divisor of $w$ with $(q,w_1)=1$, and write} $w=w_0 w_1$. Then $(w_0,w_1)=1$, so that each $d$ dividing $w$ factors uniquely as $d=d_0 d_1$ with $d_0\mid w_0$, $d_1\mid w_1$. Furthermore, let $q_1$ denote the largest divisor of $q$ coprime to $w$, and put $q=q_0q_1$. Then the prime factors of $q_0$ divide $w_0$. In particular, $(q_0,q_1)=1$. From \eqref{327} we now infer  
$$ \mathscr S(a/q,w) = \sum_{d_0\mid w_0}\sum_{d_1\mid w_1}
\frac{\mu(d_0)\mu(d_1)}{q_0^2q_1^2 d_0^2 d_1^2} |U(q_0, aq_1^2d^3) U(q_1,aq_0^2d^3)|^2. $$
However, it is readily seen from the definition of $U(q,a)$ that whenever $q$ and $d$ are coprime, then $U(q,ad^3)=U(q,a)$. It follows that $U(q_0, aq_1^2d^3)=U(q_0, aq_1^2d_0^3)$, and similarly for $U(q_1,aq_0^2d^3)$. This shows that $\mathscr S(a/q,w)$ factorises as
\begin{equation}\label{333} \mathscr S(a/q,w) = \mathscr S(aq_1^2/q_0,w_0) \sum_{d_1\mid w_1}\frac{\mu(d_1)}{d_1^2q_1^2} |U(q_1,aq_0^2)|^2. \end{equation}
For the second factor on the right hand side we apply \eqref{332} to deduce that
\begin{equation}\label{334} \sum_{d_1\mid w_1}\frac{\mu(d_1)}{d_1^2q_1^2} |U(q_1,aq_0^2)|^2
= \frac{|U(q_1,aq_0^2)|^2}{q_1^2} \prod_{p\mid w_1} \Big(1-\frac1{p^2}\Big) \ll \kappa_1(q_1)^2. \end{equation}

For the first factor in \eqref{333} we have to work harder. When $r\in\NN$ and $b\in\ZZ$, let
\begin{equation}\label{335} W(r,b) = \multsum{x,y=1}{(x,y,r)=1}^r e\Big(\frac{b}{r}(x^3-y^3)\Big). \end{equation}
Then, by familiar properties of the M\"obius function,
$$ W(r,b) = \sum_{d\mid r} \mu(d)  \multsum{x,y=1}{d\mid x,\,d\mid y}^r e\Big(\frac{b}{r}(x^3-y^3)\Big) = \sum_{d\mid r} \mu(d)|U(r/d,bd^2)|^2. $$
By periodicity, for any $c\in\ZZ$, we have $U(r/d,c)=d^{-1}U(r,cd)$, and then
$$ W(r,b) = \sum_{d\mid r} \f{\mu(d)}{d^2} |U(r,bd^3)|^2. $$
By \eqref{327} we now see that the first factor in \eqref{333} is
\begin{equation}\label{336} \mathscr S(aq_1^2/q_0,w_0) = q_0^{-2} W(q_0,aq_1^2). \end{equation}
Note here that $(a,q)=1$, and hence that $(q_0,aq_1^2)=1$.

The sum $W(r,b)$ is quasi-multiplicative in a manner similar to \eqref{331}.
In fact, when $r=r_1r_2$ with $(r_1,r_2)=1$, the numbers $r_1x_2+r_2x_1$ with $1\le x_j\le r_j$  $(j=1,2)$ meet each residue class modulo $r_1r_2$ exactly once. We now substitute 
$$ x= r_1x_2+r_2x_1, \quad y= r_1y_2+r_2y_1 $$
in \eqref{335}. The condition $(x,y,r)=1$ is equivalent to $(x_1,y_1,r_1)=(x_2,y_2,r_2)=1$, and one observes that
$$ \frac{b(x^3-y^3)}{r_1r_2} \equiv \frac{b}{r_1} r_2^2(x_1^3-y_1^3) +
\frac{b}{r_2} r_1^2(x_2^3-y_2^3) \bmod 1. $$
We then arrive at the identity
\begin{equation}\label{337} W(r_1r_2,b) = W(r_1,r_2^2b) W(r_2,r_1^2 b). \end{equation}

Equipped with \eqref{337}, it now suffices to estimate $W(r,b)$ when $(b,r)=1$ and $r$ is a power of the prime $p$, say. We then express $W(r,b)$ in terms of $U(r,b)$ and the cognate sum
$$ U^*(r,b) = \multsum{x=1}{(x,r)=1}^r e(bx^3/r). $$
In fact, for each $l\ge 1$ one has
$$  \multsum{x,y=1}{(x,y,p)=1}^{p^l} e\Big(\frac{b(x^3-y^3)}{p^l}\Big) = \multsum{x=1}{(x,p)=1}^{p^l} \sum_{y=1}^{p^l} e\Big(\frac{b(x^3-y^3)}{p^l}\Big) + \multsum{x=1}{p\mid x}^{p^l} \multsum{y=1}{(y,p)=1}^{p^l} e\Big(\frac{b(x^3-y^3)}{p^l}\Big), $$
and hence, by \eqref{335},
\begin{equation}\label{338} W(p^l,b) = U^*(p^l,b)U(p^l,-b) +U^*(p^l,-b) \multsum{x=1}{p\mid x}^{p^l}
e\Big(\f{bx^3}{p^l}\Big). \end{equation}
Lemma 4 of Hua \cite{H1} shows that when $p\nmid b$, one has $U^*(p^l,b)=0$ for $p\neq 3$, $l\ge 2$, and for $p=3$, $l\ge 3$. Hence, in these cases, one has $W(p^l,b)=0$, too. 
Next, we take $l=1$ in \eqref{338} and note that $U(p,b) = U^*(p,b)+1$ to confirm that $W(p,b)=|U(p,b)|^2 -1$. By Lemma 4.3 of Vaughan \cite{hlm}, it now follows that when $p\nmid b$, one has
$$ |W(p,b)| \le 4p. $$
We have now estimated or evaluated $W(p^l,b)$ except when $p^l=9$ where we only use the trivial bound $|W(9,b)|\le 72$. We may then summarize our results on $W(p^l,b)$ in the estimate
$$ p^{-2l} |W(p^l,b)| \le \kappa_p(p^l)^2 $$
that is now immediate, recalling the definition of $\kappa_p$. By \eqref{337} and \eqref{336}, it follows that
$$   \mathscr S(aq_1^2/q_0,w_0) \le \kappa_{w_0}(q_0)^2 = \kappa_w(q_0)^2. $$
Finally, by \eqref{333} and \eqref{334}, we infer that
$ \mathscr S(a/q,w) \ll \kappa_w(q)^2$, as required to complete the proof of the lemma.
\end{proof}

We are ready to estimate $F_w(\alpha)$ when $\alpha\in\mathfrak M$. Recall \eqref{326} and \eqref{327}. Now, by Lemmata \ref{L36}, \ref{L37} and \ref{L39}, we have
$$ F_w(\alpha) = \mathscr S(a/q,w)K(\beta) +O(P^{1+\varepsilon}). $$
Then, by Lemmata \ref{L38} and \ref{L310}, we find
$$ F_w(\alpha) \ll \kappa_w(q)^2 HP (\log P) (1+HP^2|\beta|)^{-1} + P^{1+\varepsilon}. $$
This establishes Theorem \ref{thm31} when $\alpha\in \mathfrak M$.

\section{Minor arcs: first reductions}

In this and the next two sections, we establish Theorem \ref{thm31} when \(\alpha \in \mathfrak m\). The initial part of this proof is along familiar lines, and begins with a version of Weyl's inequality. The following lemma is a mild generalization of the lemma in Vaughan \cite{V85}. Recall that \(G(\alpha, X, Y)\) is defined in \eqref{35}.

\begin{lemma}\label{L71} Suppose $1\le Y\le X$, and let $\alpha\in\mathbb R$, $a\in\mathbb Z$, $q\in\mathbb N$ with $|q\alpha-a|\le q^{-1}$ and $(a,q)=1$. Then
$$ G(\alpha)\ll (XYq^{-1/2} + X^{1/2}Y + Y^{1/2}q^{1/2}) (qX)^\varepsilon. $$
\end{lemma}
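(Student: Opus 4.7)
The plan is to recognise that $h(3x^2+3xh+h^2)=(x+h)^3-x^3$, so that $G(\alpha)$ is effectively a once-Weyl-differenced cubic Weyl sum. Carrying out a second differencing in $x$ should reduce the double sum to a linear exponential sum, after which the standard bound on sums of $\min(X,\|\theta m\|^{-1})$ applies.

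First I would apply Cauchy--Schwarz in $h$. Since the phase $\alpha h^3$ is independent of $x$,
\begin{equation*}
 |G(\alpha)|^2 \le Y\sum_{h\le Y}\Big|\sum_{X<x\le 2X} e\big(\alpha(3hx^2+3h^2x)\big)\Big|^2.
\end{equation*}
I would then expand the square and set $y=x'-x$ with $|y|<X$. The phase becomes $\alpha(6hyx+3hy^2+3h^2y)$, so the inner sum over $x$ is a geometric progression in $e(6\alpha hy\cdot x)$, bounded for $y\neq0$ by $\min(X,\|6\alpha hy\|^{-1})$. The diagonal $y=0$ contributes $Y^2 X$, which will later yield the $X^{1/2}Y$ term. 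Hence
\begin{equation*}
 |G(\alpha)|^2 \ll Y^2 X + Y\sum_{h\le Y}\sum_{0<|y|<X}\min\!\big(X,\|6\alpha hy\|^{-1}\big).
\end{equation*}

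Next I would collapse the $(h,y)$-double sum to a single variable via $m=hy$, so that $1\le|m|<XY$ with multiplicity at most $\tau(|m|)$. The standard inequality
\begin{equation*}
 \sum_{m\le M}\min(X,\|\theta m\|^{-1})\ll (M/q_0+1)(X+q_0\log q_0),
\end{equation*}
valid whenever $|q_0\theta-a_0|\le q_0^{-1}$ with $(a_0,q_0)=1$, is to be applied with $\theta=6\alpha$, whose reduced denominator $q_0$ satisfies $q_0\asymp q$. The divisor weight $\tau(m)$ is absorbed into a factor $(qX)^\varepsilon$. This yields
\begin{equation*}
 |G(\alpha)|^2\ll (qX)^\varepsilon\big(Y^2X+Y(XY/q+1)(X+q)\big)\ll (qX)^\varepsilon\big(X^2Y^2/q+XY^2+Yq\big),
\end{equation*}
and taking the square root produces the three announced terms $XYq^{-1/2}$, $X^{1/2}Y$ and $Y^{1/2}q^{1/2}$.

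The main obstacle is essentially administrative rather than conceptual: one must verify that passing from $\alpha$ to $6\alpha$ preserves the rational approximation hypothesis up to a bounded factor in the denominator, and keep the divisor-function bookkeeping clean when $\tau(m)$ is absorbed into $(qX)^\varepsilon$. No new ideas beyond the standard two-step Weyl differencing are needed; the argument is a direct generalisation of Vaughan's reasoning in \cite{V85}, adapted to permit the $h$-length $Y$ to be decoupled from the $x$-length $X$.
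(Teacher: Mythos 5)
Your argument is correct and is essentially the paper's proof: one Weyl differencing in $x$ only (equivalently, Cauchy--Schwarz in $h$ followed by squaring the $x$-sum), collapsing the resulting $\min(X,\|6\alpha hh_1\|^{-1})$ double sum to a single variable with divisor-function multiplicity, and then the standard reciprocal-sum lemma. The only cosmetic difference is that the paper sets $l=6hh_1\le 6XY$ and applies the lemma to $\alpha$ itself with the given approximation $|q\alpha-a|\le q^{-1}$, which sidesteps the check you flag about the reduced denominator of $6\alpha$; your route works too once that bounded-factor verification is written out.
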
 

\begin{proof}
One has
$$ \Big| \sum_{X<x\le 2X} e(\alpha  h(3x^2+3xh +h^2)\Big|^2 = \sum_{X<x,z\le 2X} e(3\alpha h(x-z)(x+z+h)). $$
We put $x=z+h_1$. As is usual in Weyl differencing, one substitutes $h_1$ for $x$ and carries out the sum over $z$ to find that
$$ \Big| \sum_{X<x\le 2X} e(\alpha  h(3x^2+3xh +h^2)\Big|^2 \ll X + \sum_{1\le h_1< X} \min(X, \|6\alpha hh_1\|^{-1}).$$
We sum this over $h$. Cauchy's inequality and a divisor function estimate  then lead to
\begin{align*}   |G(\alpha)|^2 & \ll Y^2X + Y \sum_{1\le h\le Y} \sum_{1\le h_1<X} \min(X, \|6\alpha hh_1\|^{-1}) \\
& \ll Y^2X + YX^\varepsilon \sum_{1\le l\le 6XY} \min(X, \|\alpha l\|^{-1}).
\end{align*}
The proof is now completed by application of the reciprocal sum lemma \cite[Lemma 2.2]{hlm}.
\end{proof}

Throughout the next two sections, suppose that \(\alpha \in \mathfrak{m}\). By Dirichlet's theorem on diophantine approximation, there exist coprime integers \(a, q\) with \(0 \leq a \leq q \leq 6HP\) and \(|q\alpha - a| \leq (6HP)^{-1}\). We fix one such pair \(a,q\) now. By \eqref{maj}, it follows that 
$$ P < q \leq 6HP,$$
and hence that $a\ge 1$.
Given the parameter \(w\) and \(\alpha \in \mathfrak m\), any {square-free} divisor \(d\) of \(w\) splits as \(d = d_q d_q'\) with \(d_q' = (d, q)\). Then \((d_q, d_q') = 1\).
Now let \(\eta\) denote a positive number, and consider the sets
\begin{align*}
\mathscr{E} &= \{d|w : \mu(d)^2=1, d \leq H, |G(\alpha d^3, P/d, H/d)| \leq P^{1+\eta} d_q^{-1}\},\\
\mathscr{D} &=  \{d|w : \mu(d)^2=1,  d \leq H, |G(\alpha d^3, P/d, H/d)| > P^{1+\eta} d_q^{-1}\}.
\end{align*}
We note that
\begin{equation}\label{72} \multsum{d\mid w}{d\le H} d_q^{-1} \le \sum_{d'_q\mid (w,q)} \multsum{d_q\mid w}{d\le H} d_q^{-1} \ll q^\varepsilon \log P \ll P^{2\varepsilon} \end{equation}
holds uniformly for all  \(w\) composed of prime factors not exceeding \(P\). It follows that
$$ \sum_{d \in \mathscr E} \left| G(\alpha d^3; P/d, H/d) \right| \leq P^{1+\eta} \sum_{d\in \mathscr E} d_q^{-1} \ll P^{1+2\eta}.$$
On combining this estimate with \eqref{36}, we now infer that
\begin{equation}\label{74} |F_w(\alpha)| \leq \sum_{d \mid w} \left| G(\alpha d^3; P/d, H/d) \right| \ll P^{1+2\eta} + \sum_{d \in \mathscr D} \left| G(\alpha d^3; P/d, H/d) \right|.\end{equation}

Next, we investigate the consequences for \(G(\alpha d^3;P/d,H/d)\) when \(d \in \mathscr D\). Set \(R_1 = R_1(d) = P^{3/2+\eta}d_q'd_q^{-1}\). Another application of Dirichlet’s theorem yields coprime numbers \(b = b_d\) and \(r = r_d\) with  $r \leq R_1$ and $|r d^3 \alpha - b| \leq R_1^{-1}\). Then \(\left| r d^3 \alpha - b \right| \leq 1/r\), and Lemma \ref{L71} together with the definition of \(\mathscr D\) delivers the chain of inequalities
\begin{align*}d_q^{-1} P^{1+\eta} 
&< \left| G(\alpha d^3; P/d, H/d) \right| \\&\ll P^{1+\eps} H d^{-2} r^{-1/2} + P^{1+\eps} d^{-3/2} + P^{1/4+\eps} d^{-1/2}r^{1/2} .\end{align*}
However, we may assume that \(\varepsilon < \eta/4\), and then
\begin{align*} P^{1+\eps} d^{-3/2} + P^{1/4+\eps}r^{1/2} d^{-1/2} &<
P^{1+\eta/2} d^{-1} + P^{1/4+\eps}R_1^{1/2} d^{-1/2} \\
&< P^{1+\eta/2} d_q^{-1} + P^{1+3\eta/4} d_q^{-1}. \end{align*}
It transpires that the last two summands on the right hand side of the penultimate display are redundant, and we conclude that
 \(d_q^{-1} P^{1+\eta} \ll P^{1+\eps} H d^{-2} r^{-1/2}\). This last inequality we recast in slightly weaker form as
\begin{equation}\label{75} r_d \ll P^{1-\eta} d^{-4} d_q^{2} \qquad (d \in \mathscr D).\end{equation}

We now apply the Poisson summation formula to $G(\alpha d^3;P/d,H/d)$, for each $d \in \mathscr{D}$. The argument is similar to that used to prove Lemma \ref{L34}. By \eqref{35} and \eqref{37}, we have
\begin{equation}\label{76}G(\alpha d^3 ; X, Y) = \sum_{h \leq Y} e(\alpha d^3 h^3) g(3\alpha d^3 h^2, 3\alpha d^3 h; X),\end{equation}
and for $d \in \mathscr{D}$ one writes
\begin{equation}\label{77}3\alpha d^3 h^2 = \frac{3 b_d h^2}{r_d} + \gamma\end{equation}
in which $\gamma = 3h^2 (\alpha d^3  - b/r)$. But then
\begin{equation}\label{78}|\gamma| \leq 3h^2 (r R_1)^{-1} \leq 3P d^{-2} P^{-3/2-\eta}d_q d_q^{\prime-1}r^{-1} < (2r)^{-1},\end{equation}
at least for large $P$. We now define $\theta$ through the equation
\begin{equation}\label{79}3\alpha d^3 h = \frac{3 b_dh}{r_d} + \theta\end{equation}
and apply Theorem \ref{quadWeyl}. In view of \eqref{77}, \eqref{78}, and \eqref{79}, we infer
\begin{align*}g(3\alpha d^3 h^2, 3\alpha d^3 h; P/d)=  r^{-1} & S(r, 3 bh^2, 3bh) J(\gamma, \theta; P/d)\\& + O((r, 3h)^{1/2} (r + r P d^{-2} |\theta |)^{1/2}).\end{align*}
We simplify the error term. By \eqref{79}, we have
$|\theta| \leq 3h (r R_1)^{-1},$
and hence, by \eqref{75},
\begin{align*}r + r P^2 d^{-2} |\theta| &\ll P^{1-\eta} d^{-4} d_q^2 +(P/d)^{5/2} R_1^{-1}\\&\ll P^{1 - \eta} (d^{-4} d_q^{2}+d^{-5/2}d_q d_q^{\prime-1}) \\&\ll P^{1-\eta} d^{-3/2}.\end{align*}

Now
$$\sum_{h \leq H/d} (h,r)^{1/2}(r + r P^2 d^{-2} |\theta|)^{1/2} \ll P^{(1 - \eta)/2} d^{-3/4} \sum_{h \leq H/d} (h,r)^{1/2} \ll P^{1 - \eta/4} d^{-7/4}.$$
We feed this to \eqref{76} with $X = P/d$, $Y = H/d$, and then sum over $d \in \mathscr{D}$. By \eqref{74}, we then arrive at
$$ |F_w(\alpha)| \ll P^{1 + 2\eta} + \sum_{d \in \mathscr{D}} \sum_{h \leq H/d} \frac{|S(r, 3bh^2,3bh)|}{r}{|J(\gamma, \theta; P/d)|}.$$
We now apply Lemma \ref{LJ} to bound $J$, and Lemma \ref{L24} to control the Gau\ss{} sum. Then we conclude that
\begin{equation}\label{711}|F_w(\alpha)| \ll P^{1 + \eta} + \sum_{d \in \mathscr{D}} \sum_{h \leq H/d} r^{-1/2} (r,h)^{1/2} P d^{-1} \Big(1 + P^2 d^{-2} h \Big|\alpha d^3-\frac{b}{r}\Big|\Big)^{-1}.\end{equation}
The sum over $h$ in \eqref{711} can be carried out with the aid of the following lemma.

\begin{lemma}\label{L72}
Let $K \geq 0$, $H \geq 1$, and $r \in \mathbb{N}$. Then
$$
\sum_{h \leq H} \frac{(r,h)}{(1 + Kh)} \ll \tau(r) (1 + \log H) H (1 + KH)^{-1}.
$$
\end{lemma}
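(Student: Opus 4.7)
My plan is to remove the gcd factor by classifying $h$ according to $(r,h)$. For each divisor $d$ of $r$, the contribution of the $h$ with $(r,h)=d$ is bounded above by the contribution of all $h\le H$ with $d\mid h$, and writing $h=dm$ then gives
\[
\sum_{h\le H}\frac{(r,h)}{1+Kh}\le \sum_{d\mid r}d\sum_{\substack{h\le H\\ d\mid h}}\frac{1}{1+Kh}=\sum_{d\mid r}d\sum_{m\le H/d}\frac{1}{1+Kdm}.
\]
The remaining task is therefore to estimate an inner sum of the shape $\sum_{m\le M}(1+Lm)^{-1}$, uniformly in $L\ge 0$ and $M\ge 1$.

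For this I would prove the elementary sublemma
\[
\sum_{m\le M}\frac{1}{1+Lm}\ll \frac{M(1+\log M)}{1+LM}.
\]
My approach is to verify the pointwise inequality
\[
\frac{1}{1+Lm}\le \frac{2M/m}{1+LM}\qquad (1\le m\le M,\ L\ge 0),
\]
which follows on cross-multiplying to reduce to $m(1+LM)\le 2M(1+Lm)$, i.e.\ $m-2M\le MLm$, and observing that the left side is non-positive while the right is non-negative. Summing this inequality over $m\le M$ and invoking the harmonic bound $\sum_{m\le M}1/m\le 1+\log M$ then gives the sublemma. This pointwise comparison is the only non-trivial ingredient, and I regard its discovery as the main substantive step of the proof.

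Applying the sublemma with $L=Kd$ and $M=H/d$, the product $LM=KH$ is independent of $d$, so collecting the pieces yields
\[
\sum_{d\mid r}d\cdot\frac{(H/d)\bigl(1+\log(H/d)\bigr)}{1+KH}\le \frac{H(1+\log H)}{1+KH}\sum_{d\mid r}1=\tau(r)\,\frac{H(1+\log H)}{1+KH},
\]
which is the desired estimate. I do not expect any serious obstacle here; the argument is wholly elementary, and the only mild point of care is verifying the pointwise comparison above, which is routine.
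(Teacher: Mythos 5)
Your proof is correct and follows essentially the same route as the paper: both arguments sort $h$ according to the divisor $d=(r,h)$ of $r$ and then estimate the resulting weighted sum elementarily. The only cosmetic difference is that you handle the weight $(1+Kh)^{-1}$ via the uniform pointwise bound $(1+Lm)^{-1}\le 2M/\bigl(m(1+LM)\bigr)$, whereas the paper splits into the cases $KH\le 1$ and $KH>1$ and, in the latter, cuts the sum at $h=1/K$; both yield the same bound with the same amount of work.
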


\begin{proof}
If $KH \leq 1$, then the sum in question is bounded above by
$$\sum_{h \leq H} (r,h) \leq \sum_{\substack{d \mid r\\ d\leq H}} d \sum_{\substack{h \leq H\\d\mid h}} 1 \leq \sum_{d \mid r} H.$$
This confirms the lemma in this case. If $KH > 1$, we  see that the sum in question is bounded above by
$$\sum_{h \leq 1/K} (r,h) + \sum_{1/K < h \leq H} \frac{(r,h)}{Kh}.$$
The first summand contributes $\tau(r)/K$, as one sees from the preceding estimate. Applying the same reasoning to the second summand, this term is bounded by
$$\tau(r) \frac{(1 + \log H)}{K}.$$
The proof is complete.
\end{proof}

When applied to \eqref{711}, Lemma \ref{L72} yields
\begin{equation}\label{712}|F_w(\alpha)| \ll P^{1 + 2\eta} + \sum_{d \in \mathscr{D}} \frac{HP^{1+\eps}}{r_d^{1/2}d^{2}} \left(1+HP^2\left|\alpha - b_d/(r_dd^3)\right|\right)^{-1}.\end{equation}
For the $d \in \mathscr{D}$ where
\begin{equation}\label{713}{r_d^{1/2} d^{2}}{(1 + HP^2 |\alpha - b_d/(r_d d^3)|)} \geq H d_q P^{-\eta},\end{equation}
we have recourse to \eqref{72} to see that
$$\sum_{\substack{d \in \mathscr{D}\\\eqref{713} \text{ holds}  }} \frac{HP}{r_d^{1/2} d^{2}} \left(1 +\left|\alpha-b_d/(r_d d^3)\right|\right)^{-1} \leq P^{1 + \eta} \sum_{d \in \mathscr{D}} d_q^{-1} \ll P^{1 +\eta+ \eps}.$$

If we now define $\mathscr{D}' = \mathscr{D}'(\alpha, w)$ to denote the set of all $d \in \mathscr{D}$ where \eqref{713} is \emph{false}, we conclude that \eqref{712} remains valid with $\mathscr{D}'$ in place of $\mathscr{D}$. For $d \in \mathscr{D}'$, we then have
\begin{equation}\label{714}r_d \leq P^{1 - 2\eta} d^{-4} d_q^2,\qquad
\Big|d^3 \alpha - \frac{b_d}{r_d}\Big| \leq r_d^{-1/2} P^{-2 -\eta} dd_q.\end{equation}

For a technical reason that becomes apparent in the next section, we now cover $\mathscr{D}'$ by the two disjoint sets
\begin{equation}\label{714a}\begin{split}\mathscr{D}^\dagger = \{d\in\mathscr{D}: d_q^2d^{-1} \leq (b_d,d^3)P^{1/4}, \eqref{714} \,\text{holds}\},\\
\quad \mathscr{D}^\ddagger = \{d\in\mathscr{D}: d_q^2d^{-1} > (b_d,d^3)P^{1/4},\eqref{714}\,\text{holds}\},
\end{split}\end{equation}
and denote the corresponding portions of \eqref{712} by
\begin{equation}\label{714b}\mathfrak f_w^\dagger = \sum_{d \in \mathscr{D}^\dagger} \frac{r_d^{-1/2} }{d^2} \Big(1 + HP^2 \Big|\alpha - \frac{b_d}{r_d d^3}\Big|\Big)^{-1}\end{equation}
and $\mathfrak f_w^\ddagger$, where the latter is the same expression with $\mathscr{D}^\ddagger$ in the role of $\mathscr{D}^\dagger$. We may then summarize our preparatory analysis as follows.

\begin{lemma}\label{L73}
Let $\alpha \in \mathfrak m$. Then, uniformly in $w$, and in the notation introduced \emph{en passant},  for each \(\eta>0\), one has
$$|F_w(\alpha)| \ll P^{1 + 2\eta} + HP^{1 + \eps} \left(\mathfrak f_w^\dagger + \mathfrak f_w^\ddagger\right).$$
\end{lemma}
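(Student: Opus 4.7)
The plan is to organize the material developed in this section into a single estimate for $|F_w(\alpha)|$. Starting from the M\"obius identity \eqref{36}, which expresses $F_w(\alpha)$ as a sum of $G(\alpha d^3;P/d,H/d)$ over divisors $d|w$ with $d\le H$, I would dichotomise the divisors into the ``tame'' set $\cal E$ and the ``wild'' set $\cal D$ according to whether the inner sum exceeds $P^{1+\eta}d_q^{-1}$ or not. The contribution from $\cal E$ is handled immediately by the divisor-type estimate \eqref{72}, giving an acceptable $O(P^{1+2\eta})$.

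For $d\in\cal D$, I would first apply Weyl's inequality (Lemma~\ref{L71}) together with a Dirichlet approximation to $\alpha d^3$ at level $R_1 = P^{3/2+\eta}d'_q/d_q$, producing coprime $(b_d,r_d)$. Comparing the lower bound from $d\in\cal D$ against Lemma~\ref{L71}, the ``wrong'' terms in Weyl's inequality are absorbed, leaving the crucial bound \eqref{75} on $r_d$. Next I would expand $G(\alpha d^3;P/d,H/d)$ via the Weyl expansion \eqref{76} and apply Theorem~\ref{quadWeyl} to the quadratic sum in $x$. The verification \eqref{78} of the hypothesis $|\beta_1|\le 1/(2q)$ is a short calculation using the bound on $r_d$. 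The resulting main term involves a Gauss sum $S(r_d,3b_dh^2,3b_dh)$ and the integral $J(\gamma,\theta;P/d)$, bounded by Lemmata~\ref{L23} and \ref{LJ}, while the error sums in $h$ via $(r_d,h)^{1/2}$ estimates and the established bound on $r_d$ produce a total error of size $O(P^{1-\eta/4}d^{-7/4})$, absorbed into $O(P^{1+2\eta})$.

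The sum over $h$ in the main term is then evaluated via Lemma~\ref{L72}, producing \eqref{712}. A second dichotomy separates out those $d\in\cal D$ satisfying the cut-off condition \eqref{713}; these contribute at most $O(P^{1+\eta+\varepsilon})$ by another appeal to \eqref{72}, leaving only the subset $\cal D'$ characterized by the refined estimates \eqref{714}. Finally I would split $\cal D' = \cal D^\dagger \cup \cal D^\ddagger$ along the boundary $d_q^2 d^{-1} \le (b_d,d^3)P^{1/4}$ to obtain the two partial sums $\fk f_w^\dagger$ and $\fk f_w^\ddagger$, giving the claimed inequality.

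The main technical obstacle is bookkeeping: ensuring that every intermediate parameter ($R_1$, the bounds on $r_d$, the various error terms from Theorem~\ref{quadWeyl}, and the divisor sums involving $d_q$ and $d'_q$) fits together so that each non-essential contribution really is absorbed into $P^{1+2\eta}$ rather than surviving as a competing term. The division of $\cal D'$ into the two subsets $\cal D^\dagger$, $\cal D^\ddagger$ at the end is not used to obtain a bound at this stage, but is prepared in anticipation of the distinct treatments needed in Section~7 for the two regimes; at the level of this lemma it only serves to name the two sums $\fk f_w^\dagger+\fk f_w^\ddagger$ that cover $\cal D'$.
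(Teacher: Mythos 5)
Your proposal follows the paper's own argument step for step: the Möbius decomposition \eqref{36}, the dichotomy $\cal E$/$\cal D$ with the $\cal E$ contribution absorbed via \eqref{72}, the Dirichlet approximation at level $R_1$ combined with Lemma \ref{L71} to force \eqref{75}, the application of Theorem \ref{quadWeyl} with the verification \eqref{78}, the $h$-summation via Lemma \ref{L72} leading to \eqref{712}, the second cut-off \eqref{713}, and the final split of $\cal D'$ into $\cal D^\dagger$ and $\cal D^\ddagger$. This is exactly the proof given in Section 6, so nothing further is needed.
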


\section{Endgame}
In this section, we estimate $\mathfrak f_w^\dagger$ and $\mathfrak f_w^\ddagger$, and we continue to use the notation introduced hitherto. We proceed to show that the approximation $b_d/(r_d d^3)$ to $\alpha$ is independent of $d \in \mathscr{D}^\dagger$. To achieve this, we apply Dirichlet's theorem to find coprime numbers $m$ and $s$ with $1 \leq s \leq P^{5/4}$ and 
$|s\alpha - m| \leq P^{-5/4}.$ 
We now compare the approximations $m/s$ and $b/(r d^3)$ of $\alpha$. By \eqref{714},
$$
\Big|\frac{m}{s} - \frac{b}{r d^3}\Big| \leq \frac{1}{s P^{5/4}} + \frac{d_q}{r^{1/2} d^2 P^{2+\eta}},
$$
and therefore, again by \eqref{714},
\begin{align*}
|mrd^3 - bs| &\leq \frac{r d^3}{P^{5/4}} + \frac{dd_q r^{1/2} s}{P^{2+\eta}} \\
&\leq P^{-1/4 - 2\eta} d^{-1} d_q^2 + P^{-3/2 - \eta}d^{-1} d_q^2 P^{5/4}
\\&\leq 2P^{-1/4 - \eta} d_q^2 d^{-1}.
\end{align*}
Hence, when $d_q^2 d^{-1}\leq (b,d^3)P^{1/4}$ and $P$ is large, we conclude that
$|mrd^3 - bs| < (b,d^3),$ 
so that $b/(rd^3) = m/s$. Note that this fraction is determined by $\alpha$ alone, and that $r_d = s/(s,d^3)$.
It follows that
$$
\mathfrak f_w^\dagger = \sum_{d \in \mathscr{D}^\dagger} \frac{(s,d^3)^{1/2}}{s^{1/2} d^2} \Big(1 + HP^2 \Big|\alpha - \frac{m}{s}\Big|\Big)^{-1}.
$$
Each $d \in \mathscr{D}$ is square-free, and therefore factors uniquely as $d = d' d''$ where $d' \mid s$ and $(d'', s) = 1$. Then
$$
\sum_{d \in \mathscr{D}} \frac{(s, d^3)^{1/2}}{d^2} \leq \sum_{d' \mid s} {d^{\prime-1/2}} \sum_{d''}d^{\prime\prime-2} \ll s^\eps,
$$
and hence,
\begin{equation}\label{81}
\mathfrak f_w^\dagger \ll P^\eps s^{-1/2} \Big(1 + HP^2 \Big|\alpha - \frac{m}{s}\Big|\Big)^{-1}. \end{equation}

\begin{lemma}\label{L81}
Uniformly for $\alpha \in \mathfrak m$ and $w$, one has $\mathfrak f_w^\dagger \ll H^{\eps - 1}$.
\end{lemma}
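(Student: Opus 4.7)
The plan is to exploit a dichotomy on the Dirichlet denominator $s$ from the approximation $m/s$ of $\alpha$ that was produced at the start of this section. Starting from \eqref{81}, namely
$$\fk f_w^\dagger \ll P^\eps s^{-1/2}\Big(1 + HP^2\Big|\alpha - \frac{m}{s}\Big|\Big)^{-1},$$
with coprime $m,s$ satisfying $1 \le s \le P^{5/4}$ and $|s\alpha - m| \le P^{-5/4}$, I would split the analysis at the threshold $s = P$.

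If $s > P$, then already $s^{-1/2} < P^{-1/2} = H^{-1}$, and discarding the second factor gives $\fk f_w^\dagger \ll P^\eps H^{-1}$ at once.

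If instead $s \le P$, the minor arc hypothesis is what we exploit. After reducing $m$ modulo $s$ (which does not alter $|\alpha - m/s|$ in the torus sense and preserves coprimality), one may assume $0 \le m \le s$, so that $(m,s)$ is a genuine Farey pair defining a major arc $\fk M(s,m)$. Since $\alpha \in \fk m$, we have $\alpha \notin \fk M(s,m)$, forcing $|s\alpha - m| > (6HP)^{-1}$, equivalently $HP^2|\alpha - m/s| > P/(6s)$. The second factor in \eqref{81} is therefore at most $6s/P$, and the whole expression is bounded by $6 s^{1/2}/P \le 6 P^{-1/2} = 6H^{-1}$.

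Combining the two cases gives $\fk f_w^\dagger \ll P^\eps H^{-1}$; since $P^\eps = H^{2\eps}$, relabeling $\eps$ yields the claim. I do not anticipate any serious obstacle here, as the argument is essentially a two-line dichotomy. The only point requiring a moment's care is verifying that $m$ may be normalized to $\{0,1,\ldots,s\}$ so as to produce a bona fide Farey pair, which cleanly handles the boundary cases $s = 1$ and $m \in \{0,s\}$.
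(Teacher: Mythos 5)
Your proof is correct and uses essentially the same mechanism as the paper: both arguments combine \eqref{81} with the fact that $\alpha\in\fk m$ forbids the coprime pair $(m,s)$ with $s\le P$ from satisfying $|s\alpha-m|\le(6HP)^{-1}$. The paper organizes the dichotomy slightly differently (splitting on whether $s^{1/2}(1+HP^2|\alpha-m/s|)\ge H/6$ and deriving a contradiction in the second branch), but this is only a cosmetic rearrangement of your two cases $s>P$ and $s\le P$.
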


\begin{proof}
It is only now that the information that $\alpha$ is on minor arcs enters the argument in an essential way.  In fact, when 
$$s^{1/2} \Big(1 + HP^2 \Big|\alpha - \frac{m}{s}\Big|\Big) \ge \frac{1}{6} H,$$
then the desired estimate for $\mathfrak f_w^\dagger$ follows from \eqref{81}. In the opposite case, we have $s \leq \frac{1}{36} H^{2}=\frac{1}{36} P$ and
$$\Big|\alpha - \frac{m}{s}\Big| \leq s^{-1/2} P^{-2} \le (6s HP)^{-1}  (6s^{1/2} H^{-1})
\le  (6s HP)^{-1}.$$
But then $\alpha \in \mathfrak M$, which is not the case. This completes the proof.
\end{proof}

We now turn to $\mathfrak f_w^\ddagger$. As it turns out, when $d_q^2 d^{-1} > (b_d,d^3)P^{1/4}$, the fractions $b_d/(r_d d^3)$ are no longer all the same. Fortunately, the method used to bound $\mathfrak f_w^\dagger$ can be developed further, and thus provides an estimate for the number of distinct values of $b_d/(r_d d^3)$.

We begin to prepare for the relevant counting argument by dyadic slicing of all essential parameters. Given $D \ge 1$, $R \ge 1$, we let $\mathscr{D}^\ddagger(D,R)$ denote the set of all $d \in \mathscr{D}^\ddagger$ where $D \leq d < 2D$, $R \leq r_d < 2R$. Note that $d \in \mathscr{D}$ implies
$$ r_dd^2 \leq P^{1 - 2\eta},$$
as is transparent from \eqref{714}. Hence, $\mathscr{D}^\ddagger(D,R)$ is empty unless $D^2 R \leq P^{1 - 2\eta}$. We denote by $\mathfrak f_w^\ddagger(D,R)$ the portion of the sum defining $\mathfrak f_w^\ddagger$ when $d$ varies over $\mathscr{D}^\ddagger(D,R)$. Then, by a familiar argument, there are numbers $D \geq 1$, $R\geq 1$ with the property that
\begin{equation}\label{82}\mathfrak f_w^\ddagger \ll \mathfrak f_w^\ddagger(D, R) (\log P)^2.\end{equation}

Next, let $\mathscr{D}^\ddagger(D,R,0)$ denote the subset of all $d \in \mathscr{D}^\ddagger(D,R)$ where $|\alpha - b/(r_d d^3)| \leq P^{-5/2}$. Further, when $B \geq 1$, let $\mathscr{D}^\ddagger(D,R,B)$ be the set of all $d \in \mathscr{D}^\ddagger(D,R)$ where
\begin{equation}\label{82a}B P^{-5/2} < |\alpha - b/(r_d d^3)| \leq 2B P^{-5/2}.\end{equation}
By \eqref{714}, the set $\mathscr{D}^\ddagger(D,R,B)$ is empty unless $B \le P^{1/2 - \eta} R^{-1/2}{D}^{-1}$. In view of \eqref{82}, this shows that
\begin{equation}\label{83}\mathfrak f_w^\ddagger \ll \mathfrak f_w^\ddagger(D,R,B) (\log P)^3,\end{equation}
where now $\mathfrak f_w^\ddagger(D,R,B)$ is the portion of the sum defining $\mathfrak f_w^\ddagger$ where $d$ varies over $\mathscr{D}^\ddagger(D,R,B)$ and either $B = 0$ or $B \in [1, P^{1/2 - \eta} R^{-1/2} D^{-1}]$. 
An inspection of \eqref{714b} shows that
\begin{equation}\label{84}\mathfrak f_w^\ddagger(D,R,B) \ll R^{-1/2} D^{-2} (1 + B)^{-1} \# \mathscr{D}^\ddagger(D,R,B).\end{equation}
\begin{lemma}\label{L82}
For all values of the parameters $D, R,$ and $B$ that appear in the preceding argument, one has
$$\# \mathscr{D}^\ddagger(D,R,B) \ll (1 + B) R^2 D^6P^{\eps - 5/2} + D P^{\eps - 1/4}.
$$
\end{lemma}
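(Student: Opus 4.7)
The plan is to count $d \in \cal D^\ddagger(D, R, B)$ by associating to each such $d$ the rational approximation $b_d/(r_d d^3)$ to $\alpha$, and combining an estimate on the number of distinct such rationals with a count of how many $d$'s produce each.

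First, I reduce $b_d/(r_d d^3)$ to lowest terms $m_d/s_d$. Since $(b_d, r_d) = 1$, cancellation happens only through $d^3$, giving $s_d = r_d d^3/(b_d, d^3)$. The condition defining $\cal D^\ddagger$ yields $(b_d, d^3) < d_q^2/(dP^{1/4}) \le d/P^{1/4}$ (using $d_q \le d$), so $s_d > r_d d^2 P^{1/4} \ge R D^2 P^{1/4}$. Next, each $m_d/s_d$ lies within $2(1 + B) P^{-5/2}$ of $\alpha$ and has denominator at most $16 R D^3$. By the elementary separation $|m/s - m'/s'| \ge 1/(ss')$ of distinct rationals, the number of distinct values of $m_d/s_d$ is $\ll (1 + B) R^2 D^6 P^{-5/2} + 1$.

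Third, for a given reduced fraction $m/s$, the identity $b_d/(r_d d^3) = m/s$ in lowest terms forces $r_d = s/(s, d^3)$, so the range of $r_d$ demands $(s, d^3) \in (s/(2R), s/R]$. The $\cal D^\ddagger$ condition further gives $(s, d^3) \ge d^2 P^{1/4}$ and forces $d \ge P^{1/4}$ (for otherwise $d_q^2 \le d^2 < d P^{1/4}$, contradicting the definition of $\cal D^\ddagger$). For each admissible divisor $g$ of $s$, the valid $d$'s are multiples of $\nu(g) := \prod_{p\mid g} p^{\lceil v_p(g)/3 \rceil} \ge g^{1/3} \ge P^{1/4}$, so at most $\ll DP^{-1/4}$ such multiples lie in $[D, 2D)$, and there are at most $\tau(s) \ll P^\eps$ admissible $g$'s per fraction.

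The main obstacle lies in combining these estimates into the claimed sum rather than their product: a naive multiplication inflates the first term by a factor $DP^{-1/4}$. I intend to circumvent this by splitting the count according to the size of $g = (s, d^3)$. When $g \ge cD^3$ for a small constant $c$, one has $\nu(g) \ge cD$ and hence at most $O(1)$ values of $d$ per admissible $g$, so that summing over the $\ll \tau(s) \le P^\eps$ such $g$'s and multiplying by the bound on the number of distinct fractions yields exactly the first term $(1+B) R^2 D^6 P^{\eps - 5/2}$. When $g < cD^3$, the corresponding reduced fractions are much more sharply constrained (since $s_d = r_d d^3/(b_d,d^3)$ with $(b_d,d^3)=d^3/g$ determines $s_d$ up to few degrees of freedom), and I expect the total contribution to be bounded by $D P^{\eps - 1/4}$ via a direct count of the admissible $(d, h)$ pairs with $h = d^3/g \le d/P^{1/4}$, exploiting the fact that the lower bound $g \ge d^2 P^{1/4}$ severely restricts how many such $d$ can arise.
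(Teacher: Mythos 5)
Your framework---bounding the number of distinct rationals $b_d/(r_dd^3)$ by a spacing argument and multiplying by the multiplicity of each value---is the same as the paper's, and the concrete steps you carry out (reduction to lowest terms, the identity $r_d=s/(s,d^3)$, the fact that $d$ must be a multiple of $\nu(g)\ge g^{1/3}$, the bound $\tau(s)\ll P^\eps$) are all correct. The gap is exactly where you locate it, in the combination, and your proposed repair does not close it. The paper resolves the mismatch by a further dyadic decomposition in $A=(b_d,d^3)\in[1,DP^{-1/4}]$. For $d$ with $(b_d,d^3)\asymp A$ the reduced denominators are $\asymp RD^3/A$, so the spacing improves to $\gg A^2(RD^3)^{-2}$ and the number of distinct fractions becomes $\ll 1+(1+B)R^2D^6A^{-2}P^{-5/2}$, while the multiplicity of each value is $O(A)$ (the paper gets this by showing $r'd'^{\,3}=c\,r_dd^3/(b_d,d^3)$ for an integer $c$ confined to $O(A)$ values; your $\nu(g)$ argument gives the even better $O(A^{1/3}P^\eps)$). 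The product $AP^\eps+(1+B)R^2D^6A^{-1}P^{\eps-5/2}$ then yields the lemma upon using $1\le A\le DP^{-1/4}$.

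Your two-case split $g\gtrless cD^3$ only recovers the endpoint $A=O(1)$ of this decomposition. In the complementary case $g<cD^3$, i.e.\ $(b_d,d^3)\gg 1$, the proposed ``direct count of admissible $(d,h)$ pairs'' cannot produce $DP^{\eps-1/4}$: for each $d$ the value $h=(b_d,d^3)$ is determined by $d$ and $\alpha$, so this count is just the number of such $d$, which a priori can be of order $D$, far larger than $DP^{-1/4}$; there is no reason why only a $P^{-1/4}$-proportion of $d\in[D,2D)$ should have $b_d$ sharing a nontrivial factor with $d^3$. Moreover the target you set for that case is too ambitious: the $A$-sliced analysis shows this range can contribute up to order $(1+B)R^2D^6P^{-5/2}$, which exceeds $DP^{-1/4}$ for, say, $D=P^{1/2}$, $R=1$, $B=0$; so the contribution of the range $g<cD^3$ must be absorbed into the \emph{first} term of the lemma, not the second. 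The repair is simply to run your own spacing and multiplicity estimates on each dyadic class of $(b_d,d^3)$ rather than on the two classes $g\gtrless cD^3$; with that change your argument goes through.
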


Equipped with this bound, we deduce from \eqref{84} that
$$\mathfrak f_w^\ddagger(D,R,B) \ll R^{3/2}D^4 P^{\eps - 5/2} + R^{-1/2} D^{-1} P^{\eps - 1/4} (1 + B)^{-1}.$$
By \eqref{714a}, the set $\mathscr{D}^\ddagger(D,R,B)$ is empty unless $D\geq \frac{1}{2} P^{1/4}$. Since $R^{3/2} D^4 \leq P^{2 - \eta}$, it follows that $\mathfrak f_w^\ddagger(D,R,B) \ll P^{\eps - 1/2}$. By \eqref{83}, we then have $\mathfrak f_w^\ddagger \ll P^{\eps - 1/2}$. In view of Lemmas \ref{L73} and \ref{L81}, we now have
$$
F_w(\alpha) \ll P^{1 + 2\eta},
$$
uniformly for $\alpha \in \mathfrak m$ and $w$ a number free of prime factors exceeding $P$. Since the positive number $\eta$ is at our disposal, this argument establishes Theorem \ref{thm31} for $\alpha \in \mathfrak m$.

This leaves the task to prove Lemma \ref{L82}. To lighten notational burdens somewhat, we abbreviate $\# \mathscr{D}^\ddagger(D,R,B)$ to $V$. With $D, R, B$ now fixed, let $V(A)$ denote the number of $d \in \mathscr{D}^\ddagger(D,R,B)$ where
\begin{equation}\label{86}A \leq (b_d,d^3) < 2A.\end{equation}
By \eqref{714a}, we have $V(A) = 0$ for $A > DP^{-1/4}$. By another dyadic slicing, it follows that there is some $A$ in the range $1 \leq A \leq DP^{-1/4}$ where
\begin{equation}\label{87}V \ll V(A) \log P.\end{equation}

We now estimate $V(A)$ in two steps. The first of these is an upper bound for the number, say $\nu$, of the different values among the rational numbers $b_d/(r_d d^3)$ as $d$ varies over the numbers counted by $V(A)$. We temporarily suppose that there are at least two different such values.

We first consider the case where $B = 0$. Then the $\nu \geq 2$ values of $b_d/(r_d d^3)$ all lie in an interval of length $2P^{-5/2}$ centered at $\alpha$. Among these values, we pick two that are closest to each other, and denote these by $b'/(r'd'^3)$ and $b''/(r'' d^{\prime\prime 3})$, where $d', d'' \in \mathscr{D}^\ddagger$, and $r' = r_{d'}, r'' = r_{d''}$, etc. By construction, we have
$$
0 < \left|\frac{b'}{r' d^{\prime 3}} - \frac{b''}{r'' d^{\prime\prime 3}}\right| \leq \frac{2}{(\nu - 1)P^{5/2}}.
$$
Hence
$$
0 < \left| {b'' r' d^{\prime 3} - b' r'' d^{\prime\prime 3}}\right| \leq \frac{2r' r'' (d'd'')^3}{(\nu - 1) P^{5/2}} \leq \frac{2^{9} R^2 D^6}{(\nu - 1) P^{5/2}}.
$$
The integer $b'' r' d^{\prime 3} - b' r'' d^{\prime\prime 3}$ contains the factor $(b',d'^3)(b'',d''^3)$, and this factor is at least as large as $A^2$. It follows that
$$
\nu \ll 1 + R^2 D^6 A^{-2} P^{-5/2}.
$$
This bound also holds when $\nu = 1$.

It is easy to modify the preceding argument when $B \geq 1$. The $\nu \geq 2$ values of $b_d/(r_d d^3)$ then lie in an interval of length $4B P^{-5/2}$ according to the upper bound in \eqref{82a}, and then the argument runs as before, and yields the estimate
\begin{equation}\label{85}\nu \ll 1 + (1 + B) R^2 D^6 A^{-2} P^{-5/2}. \end{equation}
In this form, the bound is valid for all admissible $B$.

In a second step, we estimate how many $d \in \mathscr{D}^\ddagger(D,R,B)$ with \eqref{86} can have the same value of $b_d/(r_d d^3)$. Given one such $d$, let $d'$ be another such number with associated ratio $b'/(r' d'^3)$, and suppose that $b_d/(r_d d^3) = b'/(r' d'^3)$. Then
\begin{equation}\label{88}\frac{b_d}{(b_d, d^3)} r' d^{\prime 3} = b' \frac{r_d d^3}{(b_d, d^3)}.\end{equation}
Recall that $(b_d, r_d) = (b', r') = 1$. Hence $b_d/(b_d,d^3)\mid b'$, and
it follows that $b' = c b_d/(b_d, d^3)$ for some $c \in \mathbb{N}$. The condition \eqref{88} transforms into
\begin{equation}\label{89}r' d^{\prime 3} = c \frac{r_d d^3}{(b_d, d^3)}. \end{equation}
We know that $R D^3 \leq r' d^{\prime 3} \leq 16 R D^3$, and
$$
\frac{R D^3}{2 A} \leq \frac{r_d d^3}{(b_d, d^3)} \leq \frac{16 R D^3}{A}.
$$
This is compatible with \eqref{89} for no more than $O(A)$ values of $c$. Thus, the multiplicity of values $b_d/(r_d d^3)$ is $O(A)$. We have $V(A) \ll \nu A$,
and by \eqref{85}, we conclude that
$$V(A) \ll A P^{\eps} + (1 + B) R^2 D^6 A^{-1} P^{-5/2}.$$
Now $1 \leq A \leq P^{-1/4} D$, and the lemma follows via \eqref{87}.

\section{Le coup de gr\^ace} This section is devoted to the proof of Theorem \ref{exp}. We actually obtain a version of this result that  is concerned with finite sets. Let $N$ be a large natural number, and suppose that $\mathscr Z$ is a set of $Z$ natural numbers contained in the interval $[1,N]$. We take $P=N^{2/5}$, and we let $\rho(n,N)$ denote the number of pairs $(p,z)$ of $z\in\mathscr Z$ and primes $p\in(P,2P]$ satisfying
$ p^3+z = n $. Following the strategy alluded to in the introduction, we proceed to evaluate the moments
\begin{equation}\label{91} M_\nu (N) = \sum_n \rho(n,N)^\nu \end{equation}
when $\nu =1$ or $2$.

Little needs to be said about $M_1$. Using Chebyshev's lower bound, we immediately have
\begin{equation}\label{90} M_1(N) \gg PZ (\log N)^{-1}. \end{equation}
The quadratic moment is the subject of the next lemma.

\begin{lemma}\label{L91} In the notation introduced at the beginning of this section, one has
$$ M_2(N) \ll P^{1+\varepsilon} Z + P^{\varepsilon-1} Z^2. $$
\end{lemma}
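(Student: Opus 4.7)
I would apply the circle method to
$$M_2(N) = \int_0^1 |F(\alpha)|^2 |g(\alpha)|^2\,\dd\alpha,$$
where $F(\alpha) = \sum_{P<p\le 2P} e(\alpha p^3)$ is the sum over primes and $g(\alpha) = \sum_{z\in\cal Z} e(\alpha z)$. Expanding $|F|^2$ and isolating $p_1 = p_2$ gives the diagonal $(\pi(2P) - \pi(P)) Z \ll P^{1+\eps} Z$, which already matches the first summand of the target bound. In the off-diagonal, setting $h = p_2 - p_1$, the identity $p_2^3 - p_1^3 = h(3p_1^2 + 3p_1 h + h^2) \ge 3hP^2$ combined with $|z_1 - z_2| \le N = P^{5/2}$ forces $1\le h < P^{1/2}/3 < H/3$, an automatic truncation.

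Since every prime $p > P^{1/4}$ is coprime to $\varpi := \prod_{q\le P^{1/4}} q$, the elementary sieve bound $\mathbf 1_{p,\,p+h \text{ both prime}} \le \mathbf 1_{(p,h,\varpi)=1}$ applies. Combined with $r(k) := \#\{(z,z')\in\cal Z^2 : z - z' = k\} = \int |g(\alpha)|^2 e(-\alpha k)\,\dd\alpha$, this converts the off-diagonal into
$$M_2 - \pi Z \le 2 \int_0^1 |g(\alpha)|^2 \ov{F_\varpi(\alpha)}\,\dd\alpha.$$
By Theorem \ref{thm32} we have $|F_\varpi(\alpha)| \ll HP(\log P)\,\Xi(\alpha) + P^{1+\eps}$. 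The $P^{1+\eps}$ term contributes $P^{1+\eps} Z$ after integration, so the remaining task is to estimate $HP(\log P) \int |g|^2 \Xi\,\dd\alpha$.

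The crux of the argument is bounding $\int |g|^2 \Xi$ sharply enough. My plan is a Parseval computation on each major arc $\fk N(q,a)$: substituting $\beta = q\alpha - a$ and noting $\sum_{(a,q)=1}\le \sum_{a\bmod q}$, one has the uniform-in-$\beta$ bound
$$\sum_{a\bmod q} |g((a+\beta)/q)|^2 \;=\; q\sum_{b\bmod q}|c_b(\beta)|^2 \;\le\; q\sum_{b\bmod q} c_b^2,$$
where $c_b(\beta) = \sum_{z\in\cal Z,\,z\equiv b\bmod q} e(\beta z)$ and $c_b = c_b(0)$. Together with the identity
$$\sum_{b\bmod q} c_b^2 \;=\; \sum_{z_1,z_2\in\cal Z}\mathbf 1_{z_1\equiv z_2\bmod q} \;=\; Z + \sum_{k\ne 0,\,q\mid k} r(k)$$
and the integral $\int \dd\beta/(q+HP^2|\beta|) \ll (\log P)/(HP^2)$, the major-arc estimate reduces to
$$\int |g|^2\,\Xi \;\ll\; \frac{\log P}{HP^2}\Bigl(Z\sum_{q\le Q} 4^{\omega(q)} + \sum_{k\ne 0}r(k)\sum_{q\mid k,\,q\le Q} 4^{\omega(q)}\Bigr).$$
The first bracket is $\ll Z P^{3/4+\eps}$ by $\sum_{q\le Q} 4^{\omega(q)} \ll Q(\log Q)^3$, and the second is $\ll Z^2 P^\eps$ since $\sum_{q\mid k} 4^{\omega(q)} = \prod_{p\mid k}(1+4v_p(k))\ll k^\eps \ll N^\eps$. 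Using the decisive numerical identity $HP^2 = P^{5/2} = N$, one gets $\int|g|^2\Xi \ll Z P^{-7/4+\eps} + Z^2 P^{-5/2+\eps}$. Multiplying by $HP\log P = P^{3/2+\eps}$ yields $ZP^{-1/4+\eps} + Z^2 P^{-1+\eps}$; absorbing the first summand into $P^{1+\eps}Z$ gives the desired conclusion $M_2 \ll P^{1+\eps}Z + P^{\eps-1}Z^2$.

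The hard step is the major-arc estimate for $\int|g|^2\Xi$. The decisive input is $HP^2 = N$: this coincidence makes the denominator of $\Xi$ cancel exactly against the range of the differences $z_1-z_2 \in [-N,N]$ once one passes to residues modulo $q$ via Parseval, converting a naive loss of $P^{3/4}$ (from the trivial bound $|g|\le Z$) into the required saving. The remaining steps—the sieve reduction to $F_\varpi$, the automatic truncation to $h<H$, and the direct substitution of Theorem \ref{thm32}—are routine.
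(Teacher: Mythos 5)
Your argument is correct and follows essentially the same route as the paper: isolate the diagonal, pass from primes to integers satisfying $(x,h,\varpi)=1$ via the enveloping sieve, note the automatic truncation $h\le H$, apply orthogonality and Theorem \ref{thm32}, and then prune the major-arc contribution. The only divergence is that where the paper simply quotes the pruning lemma \cite[Lemma 2]{B89} to obtain $\int_{\fk N}\Xi(\alpha)|Z(\alpha)|^2\,\dd\alpha \ll (HP^2)^{\varepsilon-1}(PZ+Z^2)$, you prove the required estimate directly by Parseval on each arc $\fk N(q,a)$; your computation is sound (indeed it yields the marginally sharper $P^{3/4}Z$ in place of $PZ$, both of which suffice), so this is a legitimate self-contained substitute for the cited ingredient.
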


\begin{proof}
Note that $M_2(N)$ equals the number of solutions of the equation
$$ p_1^3 - p_2^3 = z_1 -z_2 $$
in primes $p_1,p_2$ in the interval $(P,2P]$, and $z_1,z_2\in\mathscr Z$. The solutions with $p_1=p_2$ contribute less than $PZ$ to $M_2$. In order to count the solutions with $p_1>p_2$, we apply an enveloping sieve. Recall that $\varpi$ denotes the product of all primes not exceeding $P^{1/4}$, and let $M^\dagger(N)$ be the number of solutions of the equation
\begin{equation}\label{92} x_1^3-x_2^3 = z_1-z_2 \end{equation}
in natural numbers $x_1,x_2,z_1,z_2$ constrained by
\begin{equation}\label{93} P<x_1<x_2\le 2P, \quad (x_1,x_2,\varpi)=1, \quad z_1,z_2\in\mathscr Z. \end{equation}
For distinct primes $p_1$, $p_2$ we certainly have $(p_1,p_2,\varpi)=1$. Hence, observing symmetry, we see thus far that
$$ M_2(N) \le PZ + 2M^\dagger(N). $$
Put $x_1=x$ and $x_2-x_1 = h$. Then equation \eqref{92} transforms into
$$ h(3x^2+3xh +h^2) = z_2-z_1. $$ 
By \eqref{93}, we first see that $h\ge 1$, and then that $3x^2+3xh+h^2 > P^2$.
Recalling now that $N=P^{5/2}$, we see that 
the constraints \eqref{93} together with the preceding equation imply conditions
$$ P<x\le 2P, \quad 1\le h\le P^{1/2}, \quad (x,h,\varpi)=1,\quad z_1,z_2\in\mathscr Z. $$
We now apply the circle method to the last equation. Let
$$ Z(\alpha) = \sum_{z\in\mathscr Z} e(\alpha z) $$
and recall \eqref{38}. Then, by orthogonality, the preceding discussion shows
$$  M^\dagger(N) \le \int_0^1 F_\varpi(\alpha)|Z(\alpha)|^2\,\dd\alpha. $$
Again by orthogonality, one observes that
$$ \int_0^1 |Z(\alpha)|^2\,\dd\alpha = Z. $$
We may apply Theorem \ref{thm32} and find that
$$ M^\dagger(N) \ll P^{1+\varepsilon}Z + HP^{1+\varepsilon} \int_{\mathfrak N} \Xi(\alpha) |Z(\alpha)|^2\,\dd\alpha. $$
The pruning lemma (see \cite[Lemma 2]{B89}) yields the bound
$$ \int_{\mathfrak N} \Xi(\alpha) |Z(\alpha)|^2\,\dd\alpha
\ll (HP^2)^{\varepsilon-1} (PZ +Z^2). $$
The lemma now follows, after collecting together our estimates.
\end{proof}
  
\begin{thm}
\label{thm91} In the notation introduced at the beginning of this section, let $\Theta(N)$ denote the number of distinct values that $x^3+z$ takes as the integers $x$ and $z$ vary over $P<x\le 2P$ and $z\in \mathscr Z$. Then
$$ \Theta (N) \gg P^{-\varepsilon} \min\{ PZ, P^3\}. $$
\end{thm}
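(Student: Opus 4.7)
\smallskip

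The plan is a straightforward Cauchy--Schwarz argument using the two moment estimates already prepared for us, namely \eqref{90} and Lemma \ref{L91}. Observe first that $\Theta(N)$ is bounded below by the number of distinct $n$ of the form $p^3+z$ with $p$ a prime in $(P,2P]$ and $z\in\cal Z$, since restricting $x$ to primes can only decrease the number of representations without creating new values. In the notation of \eqref{91}, this gives
\[ \Theta(N) \ge \#\{n : \rho(n,N)\ge 1\}. \]

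By the Cauchy--Schwarz inequality,
\[ M_1(N)^2 = \Big(\sum_n \rho(n,N)\Big)^2 \le \#\{n : \rho(n,N)\ge 1\} \cdot M_2(N), \]
so that
\[ \Theta(N) \gg \frac{M_1(N)^2}{M_2(N)}. \]
Inserting the bound $M_1(N)\gg PZ(\log N)^{-1}$ from \eqref{90} together with the estimate $M_2(N) \ll P^{1+\varepsilon}Z + P^{\varepsilon-1}Z^2$ from Lemma \ref{L91}, we obtain
\[ \Theta(N) \gg \frac{P^2 Z^2}{(\log N)^2 \bigl(P^{1+\varepsilon}Z + P^{\varepsilon-1}Z^2\bigr)} \gg \frac{P^{2-\varepsilon}Z^2}{PZ + P^{-1}Z^2}. \]

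It remains only to split into two cases according to which term in $M_2(N)$ dominates. If $Z\le P^2$, then $P^{1+\varepsilon}Z$ dominates and the above bound is $\gg P^{1-\varepsilon}Z = P^{-\varepsilon}\cdot PZ$; moreover $\min\{PZ,P^3\}=PZ$ in this range, so the conclusion holds. If $Z> P^2$, then $P^{\varepsilon-1}Z^2$ dominates and the bound becomes $\gg P^{3-\varepsilon} = P^{-\varepsilon}\cdot P^3$; here $\min\{PZ,P^3\}=P^3$, and again the theorem follows. There is no genuine obstacle in this final step: all the difficulty has already been absorbed into Lemma \ref{L91}, whose proof depends on Theorem \ref{thm32} and hence on the whole minor-arc and major-arc analysis of Sections 3--7.
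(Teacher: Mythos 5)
Your argument is correct and is essentially the paper's own proof: the same Cauchy--Schwarz step $M_1(N)^2 \le M_2(N)\,\#\{n:\rho(n,N)\ge 1\}$ combined with \eqref{90} and Lemma \ref{L91}, with the concluding case analysis being a routine unravelling of the minimum. The only difference is that you spell out explicitly the (correct) observation that $\#\{n:\rho(n,N)\ge 1\}\le\Theta(N)$ and the two cases $Z\le P^2$, $Z>P^2$, which the paper leaves implicit.
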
 

\begin{proof} The argument is the same as that in \eqref{15}. We apply Cauchy's inequality in conjunction with \eqref{91},
\eqref{90} and the conclusion of Lemma \ref{L91}. Then
$$ P^{2-\varepsilon}Z^2 \ll M_1(N)^2 \ll M_2(N) \sum_{n: \rho(n,N)\ge 1} 1
\ll  \big( P^{1+\varepsilon} Z + P^{\varepsilon-1} Z^2\big)\Theta(N). $$
This establishes the theorem.
\end{proof}

\begin{proof}[The proof of Theorem \ref{exp}]
Let $\mathscr A$ be a set of exponential density $\delta>0$, and write $\mathscr A(N) = \mathscr A\cap [1,N]$. Suppose that the real number $\delta'$ satisfies $0<\delta'<\delta$. Then, there are infinitely many natural numbers $N$ where $A_N=\# \mathscr A(N) \ge N^{\delta'}$. With such an $N$, we apply Theorem \ref{thm91} with $\mathscr Z = \mathscr A(N)$. Then $\Theta(N)$
counts certain elements of $\mathscr B_3$ (defined in \eqref{11}), and all these elements do not exceed $(2P)^3+N \le 9P^3$, at least when $N$ is large.
It follows that
$$ \#\mathscr B_3(9P^3) \gg  P^{-\varepsilon} \min(P^3, PA_N) \gg P^{-\varepsilon}  \min(P^3, P^{1+5\delta'/2}). $$
It follows that $\delta_3\ge \min(1, \frac13+\frac56\delta')$, for all $\delta'<\delta$. This establishes Theorem \ref{exp}. 
\end{proof}

\vspace{2ex}\noindent
 J\"org Br\"udern \\
 Universit\"at G\"ottingen\\
Mathematisches Institut\\
Bunsenstrasse 3--5\\
D 37073 G\"ottingen \\
Germany\\
jbruede@gwdg.de\\
[2ex]
Simon L. Rydin  Myerson\\
Institutionen f\"or matematiska vetenskaper\\
Avdelningen f\"or Algebra och geometri\\
Chalmers tekniska h\"ogskola och Goteborgs Universitet\\
SE-412 96 G\"oteborg\\
Sweden\\
myerson@chalmers.se

\end{document}